\numberwithin{equation}{section}
\DeclareMathOperator{\im}{Im}
\DeclareMathOperator{\Bin}{Bin}
\newcommand{\E}{\Bbb{E}}
\renewcommand{\P}{\Bbb{P}}
\newcommand{\CC}{\mathcal{C}}
\newcommand{\EE}{\mathcal{E}}
\newcommand{\UU}{\mathcal{U}}
\newcommand{\ep}{\epsilon}
\newtheorem*{rep@theorem}{\rep@title}
\newcommand{\newreptheorem}[2]{%
\newenvironment{rep#1}[1]{%
 \def\rep@title{#2 \ref{##1}}%
 \begin{rep@theorem}}%
 {\end{rep@theorem}}}
\newtheorem{thm}{Theorem}
\newtheorem{result}{Result}[section]
\newtheorem{lem}[result]{Lemma}
\newtheorem{prp}[result]{Proposition}
\newtheorem{clm}[result]{Claim}
\theoremstyle{definition}
\newtheorem{rmk}[result]{Remark}
\newtheorem*{defn}{Definition}
\newtheorem{que}{Question}
\newtheorem*{ack}{Acknowledgements}
\theoremstyle{remark}
\DeclareMathOperator{\sseq}{\bm{\mathsf{ss}}}
\DeclareMathOperator{\LCS}{\bm{\mathsf{LCS}}}
\DeclareMathOperator{\strng}{\bm{\mathsf{string}}}
\DeclareMathOperator{\weak}{\bm{\mathsf{weak}}}
\newcommand{\hide}[1]{}
\newcommand{\edit}[1]{}%{\color{red}{#1}}}
\newcommand{\rough}[1]{}%\textbf{\textcolor{blue}{#1}}}
\definecolor{darkgreen}{RGB}{75,150,75}
\newcommand{\review}[1]{}%\textcolor{darkgreen}{#1}}
\newcommand{\dc}[1]{\textcolor{orange}{dc: #1}}
\newcommand{\hides}[1]{}%1}
\newcommand{\pub}[1]{}%\textcolor{purple}{#1}}
\title{A notion of twins}
\author{Zach Hunter}%\footnote{Institute of Science and Technology Austria, 3400 Klosterneuburg, Austria, \texttt{zachary.hunter@exeter.ox.ac.uk}}}
\email{zachary.hunter@exeter.ox.ac.uk}
\date{\today}
\begin{document}

\maketitle

\begin{abstract}Given a combinatorial structure, a ``twin'' is a pair of disjoint substructures which are isomorphic (or look the same in some sense). In recent years, there have been many problems about finding large twins in various combinatorial structures. For example, given a graph $G$, one can ask what is the largest $s$ such that there exist disjoint subsets $I,J\subset V(G)$ on $s$ vertices, such that the induced subgraphs $G[I],G[J]$ are isomorphic.

We are motivated by two different problems of finding twins in two kinds of ordered objects (strings and permutations). We introduce a new variant of ``twin problem'' which generalizes both of these. By considering this generalization, we are able to improve some bounds obtained by Dudek, Grytczuk, and Ruci\'nski,  and give a negative answer to a conjecture of theirs.
\end{abstract}

\section{Introduction}

For positive integer $n$, we write $[n]:=\{1,\dots,n\}$. Given a set $X$ and positive integer $s$, we write $X^{(s)}:= \{S\subset X:|S|=s\}$.

\hide{\subsection{General Motivation}

Informally, given a combinatorial structure, a ``twin'' is a pair of disjoint substructures which are similar (of course, there is a some freedom in deciding what is meant by `disjoint', `substructure' and `similar'). Studying ``twins'' (a.k.a., ``self-similarity'') in combinatorial structures is an old topic in combinatorics. We recall two simple-to-state, but surprisingly complex such problems below.

Let $G(m)$ denote the largest $k$ such that every graph on $m$ edges contains two edge-disjoint isomorphic subgraphs on $k$ edges. Determining the growth of $G(m)$ was independently asked by Jacobson and Sch\"onheim (see \cite{erdos}). Improving upon the work of \cite{erdos}, this problem was asymptotically resolved by Lee, Loh and Sudakov, who proved $G(m) = \Theta((m\log m)^{2/3})$ \cite{lee}. 

However, for higher uniformities of hypergraphs, such questions are still open. Specifically, let $G^{(s)}(m)$\dc{Unfortunate clash with the notation from before} be the largest $k$ such that every $m$-edge $s$-uniform hypergraph has two edge-disjoint isomorphic subgraphs on $k$ edges. For $s\ge 2$, Pach, Pyber and Erd\H{o}s proved that $G^{(s)}(m) \le O_s( m^{2/(s+1)}\log m/\log\log m)$\dc{cite?}. In \cite{gould}, it was shown that $G^{(3)}(m)\ge \Omega(m^{1/2})$, which resolves matters up to logarithmic factors. This was later extended by \cite{horn}, which showed that $G^{(s)}(m)\ge \Omega_s(m^{2/(s+1)})/\log^{O_s(1)}m$ for $s\in \{4,5,6\}$. But there is a polynomial gap for $s\ge 7$, and it is not even known that there exists some $\ep>0$ such that $G^{(s)}(m)\ge \Omega_s(m^{(1+\ep)/s})$ for all sufficiently large $s$.

For the second problem, let $H(n)$ be the largest $k$ such that any $n$-vertex graph $G$ has two disjoint vertex sets $I,J\subset V(G)$ such that $|I| = |J| \ge k$ and the induced subgraphs $G[I],G[J]$ have the same number of edges. The problem of bounding $H(n)$ was asked by Caro and Yuster, who showed that $\Omega(\sqrt{n})\le H(n)\le n/2-\Omega(\log\log n)$ \cite{caro}. Note that $H(n)\le n/2$ is trivial, as $I,J$ must be disjoint. Bollob\'as, Kittipassorn, Narayanan and Scott showed this was asymptotically tight (i.e., that $H(n)\ge (1/2-o(1))n$) \cite{bollobas}.

\dc{Ok but like what is \textit{this} paper about?}}

\subsection{Recent work in ordered settings}

We start by recalling two problems about finding ``twins'' in various ordered objects. A generalization of these shall be the focus of this note. 

Firstly, given a binary string $x\in \{0,1\}^n$, a \textit{string-twin} is a pair of disjoint indices $I =\{i_1<\dots<i_\ell\},J = \{j_1<\dots<j_\ell\} \subset [n]$, such that the subsequences $x|_I$ and $x|_J$ are equal (formally, that $x(i_t) = x(j_t)$ for each $t=1,\dots,\ell$). We define $f^{\strng}(x)$ to be the maximum $\ell$ such that there exists a string-twin $I,J$ of $x$ with $|I|=\ell$. Since $I,J$ are disjoint subsets of $n$ of equal length, it is obvious that $f^{\strng}(x)\le n/2$ always holds. 

In the work of Axenovich, Person, and Puzynina, it was shown that this upper bound was asymptotically tight (i.e., that every binary string $x\in \{0,1\}^n$ had a twin of length $(1/2-o(1))n$) \cite{axenovich}. This was acheived by establishing a celebrated ``regularity lemma for strings''. 

One can also ask analogous questions about the length of twins in $r$-ary strings $x\in [r]^n$ (here, we say $I,J\subset [n]$ are string-twins of $x$ if they are disjoint and $x|_I = x|_J$). Write $F_r^{\strng}(n)$ to denote the minimum of $f^{\strng}(x)$ over all $x\in [r]^n$. A trivial consequence of \cite{axenovich} tells us that $F_r^{\strng}(n) \ge (1/r-o(1))n$ (this is seen by passing to the subsequence induced on the two most popular letters of any $r$-ary string, and then applying their bound for the binary case). This later was further improved by Bukh and Zhou, who prove that $F_r^{\strng}(n) = \Omega(n/r^{2/3})$ \cite{bukh}. 

Recently, Dudek, Grytczuk, and Ruci\'nski introduced a similar problem involving permutations. Given a permutation $\pi\in S_n$, we define its sign sequence $\sseq(\pi) \in \{-1,1\}^{n-1}$ so that $\sseq(\pi)_i =1$ if and only if $\pi(i)< \pi(i+1)$. We then define a \textit{weak-twin} (of $\pi$) to be a disjoint pair of indices $I = \{i_1<\dots<i_\ell\}, J = \{j_1<\dots<j_\ell\}\subset [n]$, such that $\sseq(\pi|_I) = \sseq(\pi|_J)$ (formally, that $\pi(i_t)<\pi(i_{t+1})$ if and only if $\pi(j_t)<\pi(j_{t+1})$ for $t\in [\ell-1]$).

Let $f^{\weak}(\pi)$ denote the largest $\ell$ such that there exists a weak-twin $I,J$ of $\pi$ with $|I| = \ell$. And write $F^{\weak}(n)$ to denote the minimum of $f^{\weak}(\pi)$ over all $\pi\in S_n$.

Dudek et al. proved that every $\pi \in S_n$ has a weak-twin of length $\ge n/12$ \cite{dudek} (i.e., that $F^{\weak}(n) \ge n/12$). They furthermore conjectured that, like in the binary string case, every $\pi \in S_n$ should have a weak-twin of length $(1/2-o(1))n$. We shall improve upon both their upper and lower bounds for $F^{\weak}(n)$.

\subsection{A general twin problem}

\hide{For positive integer $n$, we write $[n]:=\{1,\dots,n\}$.} We write $K_n$ to denote the complete graph on vertex set $[n]$ (and here, the labels of vertices will be important).

Given positive integers $n,r$, we write $\CC_{n;r}$ to denote the set of $r$-edge-colorings of $K_n$ (i.e., the set of $c:[n]^{(2)}\to [r]$). We shall also write $\CC_n$ to denote $\bigcup_{r=1}^\infty \CC_{n;r}$ the set of all finite colorings.

Now given $c\in \CC_{n}$, a \textit{twin of size $\ell$} (with respect to $c$) is a pair of subsets $I=\{i_1<\dots<i_\ell\},J=\{j_1<\dots<j_\ell\}\subset [n]$, such that:
\begin{itemize}
    \item $I,J$ are disjoint;
    \item $c(\{i_t,i_{t+1}\}) = c(\{j_t,j_{t+1}\})$ for $t=1,\dots,\ell-1$.
\end{itemize}
Given a colored ordered object $c\in \CC$, we define $f(c)$ to be maximum $\ell$ where there exists a twin (wrt $c$) of size $\ell$. For $r,n$, we define $F_r(n)$ to be the minimum of $f(c)$ over all $c\in \CC_{n;r}$. 

\hide{We analogously define $f^{\strng}(x)$ on $r$-ary strings $x\in [r]^n$ and $f^{\weak}(\pi)$ on permutations $\pi \in S_n$. Furthermore, we define $F_r^{\strng}(n)$ to be the minimum of $f^{\strng}(x)$ over all $x\in [r]^n$ and $F^{\weak}(n)$ to be the minimum of $f^{\weak}(\pi)$ over all $\pi \in S_n$.}

While the notions of string-twins and weak-twins are somewhat similar in appearance, there doesn't seem to be a direct way to encode instances from one setting into the other (e.g., given a binary string $x\in \{0,1\}^n$, we don't know how to create some $\pi_x\in S_n$ where the string-twins of $x$ and weak-twins of $\pi_x$ are at all related). However, both settings can be encoded by our notion of twins. Specifically, we will establish the two following reductions in Section~\ref{reductions}.
\begin{prp}\label{general to weak}We have that
\[F_2(n)\le F^{\weak}(n).\]
\end{prp}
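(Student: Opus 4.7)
The plan is a direct encoding: for each permutation $\pi \in S_n$ I will produce a $2$-edge-coloring $c_\pi$ of $K_n$ whose twins are \emph{exactly} the weak-twins of $\pi$. Given such an encoding, $f(c_\pi) = f^{\weak}(\pi)$ for every $\pi$, and taking a worst-case $\pi^* \in S_n$ yields $F_2(n) \le f(c_{\pi^*}) = f^{\weak}(\pi^*) = F^{\weak}(n)$.

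The natural candidate is to record each pairwise comparison of $\pi$ as an edge color: for $i < j$ in $[n]$, set $c_\pi(\{i,j\}) = 1$ if $\pi(i) < \pi(j)$ and $c_\pi(\{i,j\}) = 0$ otherwise. This gives $c_\pi \in \CC_{n;2}$. The key observation to verify is that for any increasing tuple $I = \{i_1 < \dots < i_\ell\} \subset [n]$, the color sequence $\bigl(c_\pi(\{i_t, i_{t+1}\})\bigr)_{t=1}^{\ell-1}$ agrees (under the identification $1 \leftrightarrow 1$, $-1 \leftrightarrow 0$) with the sign sequence $\sseq(\pi|_I)$. This is immediate from the definitions, since $c_\pi$ was engineered precisely to encode which consecutive pair in $I$ is ascending.

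From this observation, the definitions of a weak-twin of $\pi$ and of a twin of $c_\pi$ become literally the same condition on a disjoint pair $I,J \subset [n]$: both demand disjointness and equality of the two induced sequences $\bigl(c_\pi(\{i_t, i_{t+1}\})\bigr)_t$ and $\bigl(c_\pi(\{j_t, j_{t+1}\})\bigr)_t$. Hence $f(c_\pi) = f^{\weak}(\pi)$, and the conclusion follows as above.

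There is no real obstacle: the proposition is essentially a definitional unpacking, and the content lies in recognizing that ``adjacent-pair colorings'' in the general twin framework are precisely the right abstraction of sign sequences. (Note that only one direction of the inequality is asserted here; the reverse would be much harder, since generic $2$-colorings need not come from a permutation.)
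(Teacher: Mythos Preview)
Your proposal is correct and matches the paper's approach essentially verbatim: the paper proves this proposition as an immediate corollary of its Lemma~\ref{weak reduction}, which defines the same coloring $c_\pi(\{i<j\})=1$ iff $\pi(i)<\pi(j)$ and makes the same observation that the consecutive-edge color sequence along any $S\subset[n]$ equals $\sseq(\pi|_S)$. Your deduction of the inequality from $f(c_\pi)=f^{\weak}(\pi)$ is exactly how the paper uses the lemma.
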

\begin{prp}\label{general to string}We have that
\[F_r(n)+1\le F_r^{\strng}(n).\]
\end{prp}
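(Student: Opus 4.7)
Given $x\in[r]^n$, I would encode $x$ as an $r$-coloring $c_x\in\CC_{n;r}$ of $K_n$ via
\[
c_x(\{i,j\})\;:=\;x(\min(i,j)),\qquad i\ne j,
\]
so that the consecutive edges within any subset $V=\{v_1<\cdots<v_\ell\}\subset[n]$ carry the color sequence $(x(v_1),\ldots,x(v_{\ell-1}))$. Under this encoding, edge matches in $c_x$ correspond to character matches at the smaller endpoints in $x$, and $c_x$ faithfully records the portion of $x$ needed to witness twins.

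The proof then proceeds by a bookkeeping argument in two directions. First, any size-$\ell$ string twin $(I,J)$ of $x$ is automatically a size-$\ell$ coloring twin of $c_x$, because the $\ell$ character equalities $x(i_t)=x(j_t)$ imply the $\ell-1$ edge-condition equalities $c_x(\{i_t,i_{t+1}\})=c_x(\{j_t,j_{t+1}\})$. Second, any size-$\ell$ coloring twin $(V,W)$ of $c_x$ yields, by discarding the tail vertices $v_\ell,w_\ell$, a size-$(\ell-1)$ string twin $(V\setminus\{v_\ell\},W\setminus\{w_\ell\})$ of $x$, since the coloring-twin condition is exactly $x(v_t)=x(w_t)$ for $t=1,\ldots,\ell-1$. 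Taken together with the defining inequality $F_r(n)\le f(c_x)$, these two observations already yield a near-tight comparison between $F_r(n)$ and $F_r^{\strng}(n)$, reflecting the fact that a size-$\ell$ string twin imposes $\ell$ character equalities while a size-$\ell$ coloring twin imposes only $\ell-1$ edge equalities.

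The ``$+1$'' advertised in the proposition comes from sharpening the second direction. Starting from a coloring twin of $c_x$ of maximum size $\ell^*=f(c_x)$ and invoking its maximality, I would argue that the tail vertices $v_{\ell^*},w_{\ell^*}$ cannot be simultaneously ``wasted'': either $x(v_{\ell^*})=x(w_{\ell^*})$ (in which case the full pair is actually a string twin of size $\ell^*$), or the failure to extend the coloring twin to size $\ell^*+1$ constrains the characters of $x$ on $[n]\setminus(V\cup W)$ strongly enough that a pigeonhole over the $r$ available characters produces a matched pair of indices past the tail, which can be appended to the truncated string twin to gain the missing unit of length. The main obstacle is making this tail-recovery step precise; this is where all the subtlety of the $+1$ lives, and once it is handled, the chain $f^{\strng}(x)\ge f(c_x)+1\ge F_r(n)+1$ holds for every $x\in[r]^n$, giving the desired $F_r(n)+1\le F_r^{\strng}(n)$.
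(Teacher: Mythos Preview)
Your encoding $c_x(\{i,j\}) = x(\min(i,j))$ and your ``second direction'' (drop the two largest indices of a size-$\ell$ coloring twin to obtain a size-$(\ell-1)$ string twin) are precisely the paper's Lemma~\ref{string reduction}; the paper simply declares the proposition a corollary of that lemma and performs no tail-recovery step at all.

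Your tail-recovery cannot be made to work, for a decisive reason: the target inequality $f^{\strng}(x) \ge f(c_x)+1$ is \emph{false for every $x$}. Indeed, your own ``first direction'' already shows that any string twin of $x$ is a coloring twin of $c_x$ of the same size, so in fact $f^{\strng}(x) \le f(c_x)$ always. Concretely, your Case~2 collapses because once $x(v_{\ell^*}) \ne x(w_{\ell^*})$ the pair $(V,W)$ is \emph{automatically} non-extendable as a coloring twin (for any larger $v',w'$ one has $c_x(\{v_{\ell^*},v'\}) = x(v_{\ell^*}) \ne x(w_{\ell^*}) = c_x(\{w_{\ell^*},w'\})$), so maximality constrains nothing about the remaining characters and there is no pigeonhole to run. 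The underlying problem is a sign slip in the stated proposition: what Lemma~\ref{string reduction} actually delivers is $F_r(n) \le f(c_x) \le f^{\strng}(x)+1$ for every $x$, hence $F_r(n)-1 \le F_r^{\strng}(n)$. The version with ``$+1$'' is not provable because it is false already at $r=2,\ n=4$: the string $1221$ has no string twin of size $2$, while the colouring of $K_4$ that makes the three edges through vertex~$1$ one colour and the remaining triangle the other colour has no monochromatic pair of disjoint edges, so $F_2(4)=F_2^{\strng}(4)=1$.
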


For lower bounds, we prove the following in Section~\ref{lower bound}.
\begin{thm}\label{main}We have
\[F_r(n) \ge n/(r^2+1)-O_r(1).\]
\end{thm}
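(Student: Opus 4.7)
My plan is to construct the twin greedily by processing $[n]$ from left to right in chunks of $r^2+1$ vertices and extending the partial twin $(I_s, J_s)$ by one position per chunk. Given the current heads $(i_s, j_s)$ and a fresh chunk $B$ of $r^2+1$ vertices, the goal is to find distinct $v, v' \in B$ with $c(i_s, v) = c(j_s, v')$, so that setting $(i_{s+1}, j_{s+1}) := (v, v')$ extends both color sequences by a matching color. The driving pigeonhole assigns to each $v \in B$ the pair $\tau(v) = (c(i_s, v), c(j_s, v)) \in [r]^2$: since $|B| = r^2 + 1 > r^2$, two vertices $v, v'$ must share a common type $(a, b)$, and in the easy sub-case $a = b$ we obtain an immediate extension.

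The main obstacle is the ``bad'' scenario where the sets $\{c(i_s, v): v \in B\}$ and $\{c(j_s, v): v \in B\}$ turn out to be disjoint subsets of $[r]$: here no color can match across $i_s$ and $j_s$ through $B$, so no direct extension exists. To bypass this, I plan to run the greedy not with a single fixed head $j_s$ but while maintaining a small reservoir of alternative choices for the last head of $J$ --- other vertices $j'$ that could replace $j_s$ without altering the prefix color sequence $c(j_1,j_2), \ldots, c(j_{s-1}, j_s)$. In a bad chunk one swaps $j_s$ for an alternative $j'$ whose forward color profile overlaps $\{c(i_s, v) : v \in B\}$, restoring the matching condition needed for the pigeonhole step; a symmetric move is available for $i_s$.

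The main technical hurdle is maintaining an adequate supply of such alternative heads (so that a good swap exists in every bad chunk) and bounding the cumulative overhead of swaps; the additive $O_r(1)$ in the statement is intended to absorb this overhead, together with the cost of choosing an initial seed $(i_1, j_1)$ and any boundary effects at the right end of $[n]$. A clean way I would try first is to enforce, after each extension, that the chosen $j_{s+1}$ is picked from the next chunk in a way that leaves at least $r$ equivalent replacements within that chunk; this buys the flexibility to always resolve the disjoint-signature obstruction in the subsequent round.
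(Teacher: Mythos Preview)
Your high-level plan --- process $[n]$ in blocks of size $r^2+1$ and extend the partial twin by one step per block --- is exactly the skeleton the paper uses. The gap is in the invariant you propose to carry through the induction.

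Keeping a single head $i_s$ together with a reservoir of $r$ interchangeable candidates for $j_s$ (your last paragraph) does not survive an adversarial coloring. Once you have committed to $i_s$ and to a $j$-reservoir $\{j_s=j^{(0)},j^{(1)},\dots,j^{(r)}\}$ inside the current block, the adversary is free to set $c(\{i_s,v\})=1$ for every $v$ in the next block $B$ and $c(\{j^{(k)},v\})=2$ for every $v\in B$ and every $k$. Then no swap of $j_s$ helps: the forward profile of each $j^{(k)}$ is still $\{2\}$, disjoint from $\{1\}$. Maintaining two \emph{independent} reservoirs, one for each head, runs into the same wall whenever those reservoirs are disjoint vertex sets. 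Your pigeonhole on the type vector $(c(\{i_s,v\}),c(\{j_s,v\}))\in[r]^2$ is also not the right engine here; as you already note, a repeated type $(a,b)$ with $a\neq b$ gives nothing.

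The invariant that does work, and that the paper carries, is a \emph{single} set $U_t\subset E_t$ of size $r+1$ such that \emph{every} unordered pair $u\in U_t^{(2)}$ is the pair of heads $\{\max(I),\max(J)\}$ of some length-$t$ twin. The crucial point is that the $I$-role and the $J$-role are not pre-assigned to the vertices of $U_t$; this is exactly what kills the ``make the two sides disjoint'' attack above. The inductive step is then a two-stage pigeonhole on the bipartite graph between $A=U_t$ and $B=E_{t+1}$: since $|A|=r+1>r$, every $b\in B$ has some colour $i$ appearing at least twice among $\{c(\{a,b\}):a\in A\}$; since $|B|=r\cdot r+1$, some $r+1$ vertices of $B$ share the same such popular colour, and these form $U_{t+1}$. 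Any pair from $U_{t+1}$ is then $c$-matchable to some pair in $U_t^{(2)}$, and a one-line extension lemma pushes the twin length up by one.
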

\noindent By Proposition~\ref{general to weak}, Theorem~\ref{main} tells us $F^{\weak}(n)\ge F_2(n) \ge n/5-O(1)$, improving Dudek et al.'s previous bound of $n/12-O(1)$. But with two colors we can do even better, in Section~\ref{improvement} we establish the following.
\begin{thm}\label{2better}We have
\[F_2(n)\ge \frac{1}{4}n-O(1).\]
Consequently, $F^{\weak}(n) \ge \frac{1}{4}n-O(1)$.
\end{thm}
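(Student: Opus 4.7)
The plan is to exploit the special structure of two-colorings to improve upon the general $n/5$ bound from Theorem~\ref{main}. I would partition $[n]$ into $m = \lfloor n/4 \rfloor$ consecutive blocks $B_k = \{4k-3, 4k-2, 4k-1, 4k\}$, and construct $I$ and $J$ by selecting, from each block $B_k$, an ordered pair $(x_k, y_k) \in B_k \times B_k$ with $x_k \ne y_k$; then setting $I = \{x_1, \dots, x_m\}$ and $J = \{y_1, \dots, y_m\}$. These sets are automatically sorted (since the blocks are in increasing order) and disjoint (since within each block we choose distinct elements), with $|I|=|J|=m$, so the twin condition reduces to requiring $c(x_k, x_{k+1}) = c(y_k, y_{k+1})$ for each $k \in [m-1]$.

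The core technical ingredient would be a local extension lemma: given a pair $(x, y)$ with $x \ne y$ preceding a 4-element set $B$, there exist distinct $x', y' \in B$ with $c(x, x') = c(y, y')$, unless every $u \in B$ shares the same signature $(c(x, u), c(y, u)) = (\alpha, \beta)$ for some $\alpha \ne \beta$. I would establish this by a short case analysis on the four signatures in $\{0, 1\}^2$ assigned to the elements of $B$: if two share a signature of form $(\alpha, \alpha)$, or if all four signatures are distinct, one extends directly; in the remaining cases either two off-diagonal signatures supplemented by a third distinct signature also permits extension, and only the uniform off-diagonal ``constant opposite'' case blocks it. A supporting structural fact is that in any $4 \times 4$ binary color matrix between consecutive blocks, not every pair of rows can be a constant-opposite pair, since four binary constants in $\{0,1\}$ cannot all be pairwise opposite; hence some ``good'' choice of $(x_k, y_k)$ in $B_k \times B_k$ always admits extension to $B_{k+1}$.

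I would then build the twin via a greedy / dynamic-programming construction that tracks which states $(x_k, y_k) \in B_k \times B_k$ remain reachable after processing the first $k$ blocks. When a direct extension from the reachable set fails, one can either swap to a different earlier choice, permit skipping the offending block, or temporarily assign two elements of a single block to $I$ and $J$ so as to use an internal edge of the block as a consecutive pair in the twin. The main obstacle is bounding the total cost of such detours by $O(1)$: this would likely rely on a structural invariant showing that the reachable set retains enough diversity across blocks, and that repeated obstructions force increasingly restrictive constraints on $c$ which cannot persist indefinitely. Accumulating these bounded losses would then yield the desired twin of length $m - O(1) = \frac{1}{4}n - O(1)$.

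Finally, the consequent bound $F^{\weak}(n) \geq \frac{1}{4}n - O(1)$ is immediate from Proposition~\ref{general to weak}.
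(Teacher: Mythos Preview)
Your proposal has a genuine gap at exactly the point you flag as ``the main obstacle.'' Tracking the reachable set of ordered pairs $(x_k,y_k)\in B_k\times B_k$ is a reasonable idea, but you never establish an invariant strong enough to guarantee extension at every step. Your structural fact (``four binary vectors cannot be pairwise complementary'') shows that not \emph{all} $6$ unordered pairs in $B_k$ can be constant-opposite with respect to $B_{k+1}$, but this is only useful if the reachable set is all of $B_k^{(2)}$. Once the reachable set is a proper subset, nothing prevents every reachable pair from being constant-opposite: e.g.\ if the reachable pairs are $\{a,b\}$ and $\{c,d\}$, colour $c(a,\cdot)=c(c,\cdot)\equiv 0$ and $c(b,\cdot)=c(d,\cdot)\equiv 1$ on $B_{k+1}$, and both reachable pairs are blocked. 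The fallback options you list (swapping earlier choices, skipping a block, spending two elements of one block) are each plausible local fixes, but you give no argument that their total cost is $O(1)$ rather than $\Theta(n)$; an adversary could in principle force such repairs at a constant fraction of the blocks.

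The paper closes this gap by strengthening the invariant: instead of a reachable \emph{pair}, it maintains a reachable \emph{triple} $U_t\in[n]^{(3)}$ such that all three $2$-subsets of $U_t$ are valid endpoints of a length-$t$ twin (a ``triangle in $G_t$''). It also lets consecutive windows overlap in one vertex: from $U_t=\{x<y<z\}$ it works inside $Z=\{z,z+1,\dots,z+4\}$, so each step exposes only four \emph{new} indices while having five vertices to play with. The extra vertex $z$ is essential in one of the three cases of the analysis (the case where many elements of $Z$ see different colours from $x$ and $y$), where the edge $\{o,z\}\subset U_t$ is used to match into $Z\setminus\{z\}$. With this stronger invariant the induction goes through cleanly with no detours and no accumulated loss. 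If you want to salvage your block-based approach, the missing ingredient is precisely a provable invariant of this flavour on the reachable set.
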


We also establish some upper bounds.
\begin{thm}\label{general upper}We have 
\[F_r(n) = O(n/r).\]
\end{thm}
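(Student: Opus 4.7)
The aim is to exhibit an explicit $r$-coloring $c \in \CC_{n;r}$ of $E(K_n)$ with $f(c) = O(n/r)$.

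A uniformly random $r$-coloring is insufficient: for each disjoint pair of increasing length-$\ell$ sequences $(I,J)$, the twin probability is $r^{-(\ell-1)}$, and a union bound over the $\binom{n}{2\ell}\binom{2\ell}{\ell} \le (e^2 n^2/\ell^2)^\ell$ such pairs yields expected count below $1$ only when $\ell \gtrsim n/\sqrt{r}$, which is weaker than the target $O(n/r)$ for $r \ge 2$. A structured construction is therefore needed.

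The approach is to use the gap-based coloring $c(\{i,j\}) = (j-i) \bmod r$ for $i<j$. Given any twin $I = \{i_1<\dots<i_\ell\}$ and $J=\{j_1<\dots<j_\ell\}$, set $a_t = i_{t+1}-i_t$ and $b_t = j_{t+1}-j_t$. The twin condition forces $a_t \equiv b_t \pmod{r}$. Split $[\ell-1] = S \sqcup T$ with $S = \{t: a_t\ne b_t\}$ and $T = \{t: a_t = b_t\}$. On $S$, the congruence together with $a_t \ne b_t$ gives $|a_t - b_t| \ge r$, hence $a_t+b_t \ge r+2$. Combining with $\sum_t(a_t+b_t) \le 2(n-1)$ yields $|S| \le 2(n-1)/(r+2) = O(n/r)$.

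The main obstacle is controlling $|T|$: on each maximal $T$-run, $a_t = b_t$ forces $I$ and $J$ to be translates of each other by a constant shift $\Delta$, and a single such run can attain length up to $n/2$. To break this translate symmetry, I would modify the coloring to inject positional information while keeping $r$ colors total, taking
\[
c(\{i,j\}) = \bigl((j-i) + \sigma(i)\bigr) \bmod r
\]
for a well-chosen $\sigma: [n] \to \{0,\dots,r-1\}$. On a $T$-run, the equality $a_t = b_t$ combined with the color constraint forces $\sigma(i_t) \equiv \sigma(i_t+\Delta) \pmod{r}$ along the run. For a suitably ``scrambling'' $\sigma$ (random, or an explicit sequence free of long autocorrelations), every shift $\Delta\neq 0$ admits at most $O(\log_r n)$ consecutive indices $i$ with $\sigma(i) \equiv \sigma(i+\Delta) \pmod r$; combined with the bound $|S|+1 = O(n/r)$ on the number of $T$-runs, this gives $|T| = O((n \log_r n)/r)$, and hence $\ell = O((n\log_r n)/r)$.

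The main technical challenge is eliminating the spurious $\log_r n$ overhead to achieve the claimed $O(n/r)$. I expect this requires a sharper choice of $\sigma$ (e.g.\ an explicit de Bruijn-like or Sidon-type sequence with worst-case autocorrelation bounds), or a more careful combinatorial accounting that exploits the interaction between the $S$-positions and the endpoints of the $T$-runs simultaneously with the global disjointness of $I$ and $J$ in $[n]$.
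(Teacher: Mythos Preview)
Your proposal has two genuine gaps, beyond the admitted $\log_r n$ loss.

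First, once you modify the coloring to $c(\{i,j\}) = ((j-i)+\sigma(i)) \bmod r$, the $|S|$-bound no longer holds. The twin constraint at step $t$ becomes $a_t - b_t \equiv \sigma(j_t)-\sigma(i_t) \pmod r$, and since $\sigma$ ranges over all of $\{0,\dots,r-1\}$ the right-hand side can be any residue. In particular nothing prevents $|a_t-b_t|=1$ when $a_t\neq b_t$, so the inequality $a_t+b_t\ge r+2$ fails and you lose the bound $|S|=O(n/r)$ on which the rest of the argument relies.

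Second, and more seriously, the $T$-run length bound is incorrect as stated. Along a maximal $T$-run the shift $\Delta=j_t-i_t$ is constant, but the indices $i_t$ themselves are an arbitrary increasing sequence, \emph{not} consecutive integers. Your autocorrelation hypothesis on $\sigma$ (``at most $O(\log_r n)$ consecutive integers $i$ with $\sigma(i)\equiv\sigma(i+\Delta)$'') does not control this: for any $\sigma$ and any fixed $\Delta$, roughly $n/r$ integers $i$ satisfy $\sigma(i)\equiv\sigma(i+\Delta)$, and the adversary building the twin is free to choose the $i_t$'s from exactly that set. So a single $T$-run can have length on the order of $n/r$, not $\log_r n$, and your bound on $|T|$ collapses. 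Consequently the argument does not even yield the weaker $O((n\log_r n)/r)$.

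For comparison, the paper's construction is quite different in spirit: it partitions $[n]$ into $m\approx 2n/r$ blocks of size $r^*=\lfloor r/2\rfloor$, colors inter-block edges by a random $r^*$-ary string $x$ of length $m$ (depending only on the block of the smaller endpoint), and colors intra-block edges via random permutations of $[r^*]$ indexed by a second random string $y\in[r^2]^m$. The analysis then reduces to known first-moment bounds on string-twins of $x$ and $y$ and on the LCS of two random permutations, which together force any twin to have length $O(m)=O(n/r)$. The block structure is what sidesteps the ``translate'' obstruction you encountered: within a single block the permutation scrambling prevents long parallel runs, while across blocks the string $x$ limits how many block-transitions a twin can make.
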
\noindent As previously noted, Bukh and Zhou proved that $F_r^{\strng} = \Omega(n/r^{2/3})$ which is greater than $F_r(n)$ for large $n$ (assuming $r$ is big enough).

Furthermore, we can adapt our techniques to establish the following.
\begin{thm}\label{weak upper}There exists some absolute constant $\eta>0$, so that for all large $n$ we have 
\[(1/2-\eta)n\ge F^{\weak}(n)\ge F_2(n).\]
\end{thm}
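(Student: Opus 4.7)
The lower bound $F^{\weak}(n) \ge F_2(n)$ is immediate from Proposition~\ref{general to weak}, so the task is to prove the upper bound $F^{\weak}(n) \le (1/2 - \eta)n$.

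I first note that a uniformly random $\pi \in S_n$ will not serve as the hard instance. Indeed, for any fixed disjoint $I, J \subset [n]$ of size $\ell$, the restrictions $\pi|_I$ and $\pi|_J$ depend on values of $\pi$ at disjoint positions, so they are independent, each with descent distribution equal to that of a uniform $\sigma \in S_\ell$. Writing $p_S$ for the probability that $\sseq(\sigma) = S$, we get
\[\Pr[(I, J)\text{ is a weak-twin}] \ = \ \sum_S p_S^2 \ \ge \ 2^{1 - \ell}\]
by Cauchy--Schwarz, and for $\ell = (1/2 - \eta) n$ this is far too large to union-bound over the $\binom{n}{\ell}\binom{n - \ell}{\ell}$ candidate pairs, so the first moment method by itself cannot succeed.

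The plan is therefore to introduce additional structure mirroring the construction behind Theorem~\ref{general upper}. Partition $[n]$ into $m = n/k$ consecutive blocks $B_i$ of size $k$ (where $k$ is a large constant), and define $\pi$ to send each $B_i$ onto the value-interval $V_i = [(i-1)k + 1,\ ik]$, with $\pi|_{B_i}$ an independent random copy of a ``hard'' permutation $\tau \in S_k$ derived from the extremal $2$-coloring used in the proof of Theorem~\ref{general upper}. All $m-1$ transition bits of $\sseq(\pi)$ are then $+$. For a candidate weak-twin $(I, J)$, matching of sign sequences decomposes into within-block matchings (which force a weak-twin inside each relevant copy of $\tau$) together with constraints on the block-skeletons of $I$ and $J$. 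A case analysis on how many blocks $I, J$ touch and on the alignment of their skeletons, combined with the block-level bound inherited from Theorem~\ref{general upper}, should yield $f^{\weak}(\pi) \le (1/2 - \eta)n$ for some absolute $\eta > 0$ determined by the efficiency of the block-level construction.

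The main obstacle is handling the alignment of the blocks used by $I$ and by $J$: the weak-twin condition does not force a block-for-block correspondence, so the skeletons of $I$ and $J$ can interleave in ways that exploit the free $+$ transition bits to avoid triggering the harder intra-block constraints. Controlling this requires averaging over the independent block permutations and bounding the probability that any alignment of skeletons admits a valid matching; the real work is importing the finer counting from the proof of Theorem~\ref{general upper}, not just applying its conclusion.
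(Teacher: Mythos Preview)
This is an outline, not a proof, and it departs from the paper's construction in a way that loses the crucial mechanism. The paper's permutation is built from blocks of \emph{random sizes}, not fixed size: one samples $x\in[r]^m$ uniformly (for a large constant $r$) and takes $\pi_x$ to be the concatenation of decreasing runs of lengths $3^{x_1},\dots,3^{x_m}$. Thus the internal structure of each block is deterministic (the decreasing permutation); all the randomness lives in the block lengths. For a candidate twin $(I,J)$ one forms an alignment graph on the block set $[m]$, and on each connected component shows that with probability at least $1-1/r$ some block is not fully covered by $I\cup J$. For a singleton component this uses that block lengths are odd (so a block cannot be split evenly between $I$ and $J$); for a path component it is a divisibility argument exploiting that the lengths are powers of $3$ with independent random exponents. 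A second estimate, handling the regime with few components, uses that a block strictly larger than both its path-neighbours must be wasted. With fixed block size $k$ all three arguments collapse: an even $k$ allows an even split of a single block, equal sizes give no divisibility obstruction along a path, and there are no local maxima.

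There is also a circularity in your appeal to Theorem~\ref{general upper}. That theorem produces an $r$-edge-coloring of $K_k$ for large $r$, not a $2$-coloring, and certainly not a permutation $\tau\in S_k$ (not every $2$-coloring arises from a permutation via Lemma~\ref{weak reduction}); for $r=2$ its bound $F_2(k)=O(k)$ is vacuous. If you simply posit a $\tau$ with $f^{\weak}(\tau)\le(1/2-\eta)k$, you are assuming the theorem at scale $k$. And if you place the \emph{same} $\tau$ in every block, then $I=B_1\cup B_3\cup\cdots$, $J=B_2\cup B_4\cup\cdots$ is already a weak-twin of size $\lfloor n/2\rfloor$, since all blocks have identical sign sequences and all inter-block transitions are $+$.
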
\noindent This contradicts a conjecture of Dudek et al.

\subsection{Organization}

In Section~\ref{reductions}, we briefly deduce Propositions~\ref{general to weak} and \ref{general to string}, which demonstrate that our twin problem appropriately ``encodes'' the notions of string-twins and weak-twins. In Section~\ref{lower bound}, we prove our general lower bound for $F_r(n)$ (Theorem~\ref{main}). In Section~\ref{improvement}, we get an improved lower bound for $F_2(n)$ (Theorem~\ref{2better}). In Section~\ref{upper bounds} we get our two upper bounds (Theorem~\ref{general upper} and Theorem~\ref{weak upper}).

\begin{ack} We thank Matthew Kwan for introducing us to the weak-twin problem posed in \cite{dudek}. We also thank Daniel Carter and Benny Sudakov for helpful feedback on the writing of this manuscript.

Part of this work was conducted while the author was staying at IST Austria, we are grateful for their hospitality.

\end{ack}

\section{Reductions}\label{reductions}

In this section, we quickly establish Propositions~\ref{general to weak} and \ref{general to string}.

Proposition~\ref{general to weak} is a corollary of our first lemma.
\begin{lem}\label{weak reduction} Given $\pi \in S_n$, and there exists $c =c_\pi \in \CC_{n;2}$ so that $I,J\subset [n]$ is a twin of $c$ if and only if $I,J$ is a weak-twin of $\pi$.

\begin{proof}We define $c=c_\pi: [n]^{(2)}\to \{-1,1\}$, which we consider an element of $\CC_{n;2}$ (since nothing really changes upon relabelling our color palette). For $e=\{i<j\}\in [n]^{(2)}$, we set $c(e) = 1$ if and only if $\pi(i)<\pi(j)$. For $S= \{s_1<\dots<s_\ell\}\subset [n]$, we have that $c(\{s_1,s_2\}),\dots,c(\{s_{\ell-1},s_\ell\}) = \sseq(\pi|_S)$. 

Thus, $I,J\subset [n]$ are twins (wrt $c$) if and only if $I,J$ are disjoint and $\sseq(\pi|_I) = \pi(\pi|_J)$ (which is equivalent to $I,J$ being weak-twins (of $\pi$)).
\end{proof} 
\end{lem}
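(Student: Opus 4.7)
The plan is to build the coloring $c_\pi$ directly from the information about ascents and descents of $\pi$, using a two-element palette that we may identify with $\{-1,1\}$. Concretely, for each edge $e = \{i,j\} \in [n]^{(2)}$ with $i<j$, I would set $c_\pi(e) = 1$ if $\pi(i)<\pi(j)$ and $c_\pi(e) = -1$ otherwise. This $c_\pi$ is a member of $\CC_{n;2}$ (since the specific choice of two labels for the colors is immaterial), and it encodes every pairwise comparison made by $\pi$ as edge data on $K_n$.

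The next step is to verify the central observation: for any subset $S = \{s_1 < \dots < s_\ell\} \subset [n]$, the sequence of colors along consecutive edges, namely $c_\pi(\{s_1,s_2\}), \dots, c_\pi(\{s_{\ell-1}, s_\ell\})$, coincides entrywise with $\sseq(\pi|_S)$. This is just a matter of unwinding both definitions: the $t$-th coordinate of $\sseq(\pi|_S)$ records whether $\pi(s_t) < \pi(s_{t+1})$, which is exactly what $c_\pi$ assigns to the edge $\{s_t, s_{t+1}\}$ by construction.

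With that identification in hand, the equivalence claimed in the lemma is automatic. A disjoint pair $I = \{i_1<\dots<i_\ell\}$, $J = \{j_1<\dots<j_\ell\}$ satisfies the twin condition for $c_\pi$ precisely when $c_\pi(\{i_t, i_{t+1}\}) = c_\pi(\{j_t, j_{t+1}\})$ for every $t$, which, by the observation above, translates to $\sseq(\pi|_I) = \sseq(\pi|_J)$, i.e., exactly the weak-twin condition for $\pi$. I do not anticipate any real obstacle: the proof is essentially a translation between two equivalent formalisms, and the only conceptual content is noticing that the sign sequence of a restricted permutation depends only on consecutive pairwise comparisons, which is precisely the local data that a $2$-edge-coloring of $K_n$ records.
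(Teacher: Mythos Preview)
Your proposal is correct and follows essentially the same approach as the paper: define $c_\pi(\{i<j\})=1$ iff $\pi(i)<\pi(j)$, observe that the consecutive-edge color sequence of any $S$ equals $\sseq(\pi|_S)$, and read off the equivalence. There is nothing to add.
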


Proposition~\ref{general to string} is a corollary of our second lemma.
\begin{lem}\label{string reduction} Given $x\in [r]^n$, there exists $c=c_x\in \CC_{n;r}$ so that $I,J$ is a twin of $c$ if and only if $I\setminus \{\max(I)\},J\setminus \{\max(J)\}$ is a string-twin of $x$ (and $|I|= |J|$).
\begin{proof}
    We define $c=c_x\in \CC_{n;r}$ as follows. For $e=\{i<j\}\in [n]^{(2)}$, we take $c(e) = x(i)$. 
    
    By construction, it is rather clear that our claim about the twins of $c$ holds.\end{proof}

\end{lem}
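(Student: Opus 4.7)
The plan is to transfer the information carried by the positions of $x$ onto the edges of $K_n$ in the simplest possible way. I would define $c_x \in \CC_{n;r}$ by setting
\[ c_x(\{i,j\}) = x(\min(i,j)) \]
for every $\{i,j\} \in [n]^{(2)}$; in words, each edge inherits the letter of $x$ sitting at its smaller endpoint. This is the analog for strings of the sign-coloring used in Lemma~\ref{weak reduction}, but simpler, because we only need to record one letter per edge rather than a comparison between two letters.

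The verification is then essentially bookkeeping. Given $I = \{i_1 < \cdots < i_\ell\} \subset [n]$, the sequence of colors
\[ c_x(\{i_1,i_2\}),\ c_x(\{i_2,i_3\}),\ \ldots,\ c_x(\{i_{\ell-1},i_\ell\}) \]
equals $x(i_1), x(i_2), \ldots, x(i_{\ell-1})$, which is exactly the subsequence $x|_{I \setminus \{\max(I)\}}$. Applying the same identity to $J$, the matching-edge-color condition $c_x(\{i_t,i_{t+1}\}) = c_x(\{j_t,j_{t+1}\})$ for $t \in [\ell-1]$ becomes $x(i_t) = x(j_t)$ for $t \in [\ell-1]$, i.e., $x|_{I\setminus\{\max(I)\}} = x|_{J\setminus\{\max(J)\}}$. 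Disjointness and equal length pass through without change, so the biconditional falls out immediately.

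The only conceptual point worth flagging --- more a feature than an obstacle --- is why the $\max$ has to be stripped off each side: each consecutive edge ``sees'' only its lower endpoint under this coloring, so the topmost index of each set contributes no color information and therefore no constraint. This extra free vertex on top of each string-twin is exactly the source of the $+1$ gap recorded in Proposition~\ref{general to string}.
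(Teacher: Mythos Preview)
Your construction $c_x(\{i,j\}) = x(\min(i,j))$ is exactly the coloring the paper uses, and your verification is the same argument carried out in more detail than the paper's one-line ``by construction, it is rather clear.'' The approaches are identical.
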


\section{Lower bound}\label{lower bound}

\subsection{Brief outline}\label{outline}

We are loosely motivated by the following simple proof that $F_2^{\strng}(n)\ge \lfloor n/3\rfloor$. Fix some binary string $x\in \{0,1\}^n$, and set $\ell = \lfloor n/3\rfloor $. 

For $i=1,\dots,\ell$, we have the discrete interval $E_t = \{3t-2,3t-1,3t\}\subset [n]$. By pigeonhole, we can find distinct $i_t,j_t\in E_t$ such that $x(i_t) = x(j_t)$. 

Then, we obtain a string-twin of length $\ell$ by taking $I= \{i_1<i_2<\dots <i_\ell\}$ and $J = \{j_1<j_2<\dots<j_\ell\}$. Indeed, it is clear that $I,J$ are disjoint, as $i_t,j_t\in E_t$ are always distinct and the intervals $E_t$ are disjoint. Meanwhile, since $\max(E_t)<\min(E_{t+1})$ it clear that $i_t<i_{t+1}$ and $j_t<j_{t+1}$ for $t=1,\dots,\ell-1$ (meaning the order of the indices is correct). Finally, we have $x(i_t) = x(j_t)$ for all $t\in [\ell]$, whence $x|_I = x|_J$ as desired.

Unfortunately, in our more generalized setting, such a strategy cannot work. Indeed, if we pick some $i_1,j_1\in [n]$ to be the first indices of $I$ and $J$, then it might be impossible to find $i_2>i_1,j_2>j_1$ such that $c(\{i_1,i_2\}) = c(\{j_1,j_2\})$ (e.g., the coloring could make $c(\{i_1,i_2\})$ always red and $c(\{j_1,j_2\})$ always blue). So, instead of building one twin iteratively, we shall build multiple twins. 

\subsection{Proof of Theorem~1}

Given $c \in \CC_{n;r}$, we say a pair of tuples $x,y \in [n]^2$ form a \textit{$c$-matching} if $c(\{x_1,y_1\}) = c(\{x_2,y_2\})$. 

We say a pair of $2$-sets $u,v \in [n]^{(2)}$ are \textit{$c$-matchable} if we can order these $2$-sets to get a pair of tuples which form a $c$-matching.\edit{phrasing is stupid lol}

The relevance of these definitions is the following.

\begin{lem}\label{extension}Consider some coloring $c\in \CC_n$. Let $I,J\subset [n]$ be a twin (wrt $c$) of length $\ell>0$. 

Set $u = \{\max(I),\max(J)\} \in [n]^{(2)}$. If there exists $v \in [n]^{(2)}$ where $\max(u)<\min(v)$ and $u,v$ are $c$-matchable, then there is a twin  $I',J'$ of length $\ell+1$ with $\{\max(I'),\max(J')\} = v$.
\begin{proof} By definition of $c$-matchability, we may write $v= \{i',j'\}$ so that $c(\{\max(I),i'\}) = c(\{\max(J),j'\})$.

We simply take $I' = I\cup \{i'\},J' = J\cup\{j'\}$. It is routine to check that the desired properties are satisfied. For completeness, this is done below. 

Since $\min(v)>\max(u) = \max\{\max(I),\max(J)\}$, we have that $v$ is disjoint from $I\cup J$, whence $|I'| = \ell+1 = |J'|$. Furthermore, since $v$ is a $2$-set, $i'\neq j'$, so it follows that $I',J'$ are disjoint (since by definition $I,J$ are disjoint).

Then, write $I' = \{i_1<\dots<i_{\ell+1}\},J' = \{j_1<\dots<j_{\ell+1}\}$. By assumption, we have that \[c(\{i_\ell,i_{\ell+1}\}) =c(\{\max(I),i'\}) = c(\{\max(J),j'\}) = c(\{j_\ell,j_{\ell+1}\}).\] Meanwhile, for $t\in [\ell-1]$, the fact that $I,J$ is a twin guaruntees that $c(\{i_t,i_{t+1}\}) = c(\{j_t,j_{t+1}\})$ as desired.

Hence, $I',J'$ are twins. Also, as $\min(v) > \max(I\cup J)$, it is clear that $\max(I') = i',\max(J') = j'$ implying the last property.\end{proof}
\end{lem}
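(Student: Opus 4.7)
The plan is to unpack the definition of $c$-matchability to pick which element of $v$ to append to $I$ and which to append to $J$, and then check that the resulting pair $I',J'$ satisfies all parts of the definition of a twin of length $\ell+1$.

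First, by the hypothesis that $u=\{\max(I),\max(J)\}$ and $v$ are $c$-matchable, I can write $v$ as an ordered pair $(i',j')$ (up to swapping which element of $v$ I call $i'$ vs.\ $j'$) such that $c(\{\max(I),i'\})=c(\{\max(J),j'\})$. This is the whole content of $c$-matchability: the 2-sets $u,v$ can be ordered into tuples $(\max(I),\max(J))$ and $(i',j')$ forming a $c$-matching.

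Then I simply set $I':=I\cup\{i'\}$ and $J':=J\cup\{j'\}$, and check the three things: (i) $I'\cap J'=\emptyset$; (ii) $|I'|=|J'|=\ell+1$ with $\max(I')=i'$ and $\max(J')=j'$; (iii) the consecutive-edge color condition. For (i) and (ii), the key observation is that every element of $v$ lies strictly above $\max(u)$, which is the largest index appearing in $I\cup J$, so $i'$ and $j'$ are both new indices, and moreover $i'\ne j'$ since $v\in [n]^{(2)}$ has two distinct elements. This gives disjointness, the correct cardinalities, and shows that $i',j'$ become the new maxima of $I',J'$, so $\{\max(I'),\max(J')\}=\{i',j'\}=v$ as claimed. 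For (iii), writing $I'=\{i_1<\cdots<i_{\ell+1}\}$ and $J'=\{j_1<\cdots<j_{\ell+1}\}$, the pairs $(i_t,i_{t+1}),(j_t,j_{t+1})$ for $t\le \ell-1$ are exactly the consecutive pairs of the old twin $I,J$, so their colors match by hypothesis; for $t=\ell$ we have $(i_\ell,i_{\ell+1})=(\max(I),i')$ and $(j_\ell,j_{\ell+1})=(\max(J),j')$, whose colors are equal by the choice of the ordering from $c$-matchability.

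There is essentially no obstacle here: the lemma is really a bookkeeping statement turning $c$-matchability into a genuine extension step, and the only mildly subtle point is being careful that $\min(v)>\max(u)$ is used twice (once to guarantee disjointness from $I\cup J$, and once to guarantee that $i',j'$ sit at the ends of $I',J'$ so that the new consecutive pair coincides with the edge we colored-matched).
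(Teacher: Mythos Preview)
Your proof is correct and essentially identical to the paper's own argument: you use $c$-matchability to label $v=\{i',j'\}$ so that $c(\{\max(I),i'\})=c(\{\max(J),j'\})$, set $I'=I\cup\{i'\}$ and $J'=J\cup\{j'\}$, and then verify disjointness, correct cardinalities, the new maxima, and the consecutive-edge color condition exactly as the paper does. Your closing remark about the double use of $\min(v)>\max(u)$ is precisely the only point worth highlighting, and the paper highlights it the same way.
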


We now establish a matchability result, which will allow us to carry out a modified version of the argument sketched in Subsection~\ref{outline}.

\begin{lem}\label{matchable} Let $r,k \ge 1$. Take $\Gamma = (A,B,E)$ to be a copy of $K_{r+1,rk+1}$, with $|A| = r+1,|B|= rk+1$. Consider any $r$-coloring $c:E\to [r]$.

Then there exists $B'\subset B$ with $|B'| \ge k+1$ such that for $u\in B'^{(2)}$, there exists $v\in A$ such that $u,v$ are $c$-matchable.
\begin{rmk}\label{extremal} This sharp in two aspects. First, if $|A| \le r$, then fixing some injection $\iota:A \to [r]$, then taking $c(ab) = c(\iota(a))$ will lack any $u\in B$ which are matchable.

Secondly, if $|B|<rk+1$, then we can partition $B$ into $r$ parts $B_1,\dots,B_r$ each of size at most $k$. Here if we take $c(ab) = i$ for all $b\in B_i$, we see that if $u\in B^{(2)}$ is matchable, then $u\in B_i^{(2)}$ for some $i$.\end{rmk}
\begin{proof}For $b\in B$ and $i\in [r]$, we say $b$ is $i$-popular if there are distinct $a,a'\in A$ with $c(ab) = c(a'b) = i$. By pigeonhole, we may define a map $\phi:B\to [r]$ such that $b$ is $\phi(b)$-popular for each $b\in B$ (this is because $|A|>[r]$). 

Applying pigeonhole again, there must exist some $i\in [r]$ such that $B':= \phi^{-1}(i)$ has $|B'|\ge \frac{|B|}{r}>k$ (implying $|B'|\ge k+1$ as the cardinality must be an integer). 

Finally, we note that each $\{b_1,b_2\}\in B'^{(2)}$ is $c$-matchable. Indeed, since they are both $i$-popular, there exist $a_1\in A$ and distinct $a_2,a_2'\in A$ such that \[c(a_1b_1) = c(a_2b_2) = c(a_2'b_2) = i.\]As $a_2,a_2'$ are distinct, we may WLOG assume $a_2\neq a_1$, whence $a_1b_1,a_2b_2$ is a $c$-matching.
\end{proof}
\end{lem}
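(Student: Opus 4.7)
The plan is to prove the lemma via two successive applications of pigeonhole. The key insight is that the bound $|A|=r+1$ forces each vertex of $B$ to have a ``popular color'' --- some color appearing on at least two of its edges to $A$ --- and the bound $|B|=rk+1$ then guarantees many vertices of $B$ share the same popular color.

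More concretely, for each $b\in B$ I would consider the map $A\to[r]$ given by $a\mapsto c(ab)$. Since $|A|=r+1>r$, pigeonhole yields at least one color $i\in[r]$ achieved by at least two distinct elements of $A$; I fix one such choice and call it $\phi(b)$. This defines a function $\phi:B\to[r]$, and since $|B|=rk+1$, a second pigeonhole delivers some color $i^*$ with $|\phi^{-1}(i^*)|\ge\lceil(rk+1)/r\rceil=k+1$. I set $B':=\phi^{-1}(i^*)$.

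It then remains to verify the matchability property on $B'$. Given any $u=\{b_1,b_2\}\in (B')^{(2)}$, both $b_1$ and $b_2$ are $i^*$-popular: there is some $a_1\in A$ with $c(a_1b_1)=i^*$, and there are \emph{two distinct} $a_2,a_2'\in A$ with $c(a_2b_2)=c(a_2'b_2)=i^*$. Setting $v=\{a_1,a_2\}$, and swapping $a_2\leftrightarrow a_2'$ if necessary to guarantee $a_1\neq a_2$, I obtain two honest $2$-sets $u,v$ which can be ordered as tuples $(b_1,b_2),(a_1,a_2)$ to form a $c$-matching.

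The main subtlety --- and the reason the hypothesis $|A|=r+1$ (rather than just $|A|=r$) is unavoidable --- lies in that last step: if $b_2$ had only one $A$-neighbor receiving color $i^*$, I would lack the freedom to choose $a_2\neq a_1$, and $\{a_1,a_2\}$ might collapse to a singleton rather than a valid $2$-set. Thus the ``popular color'' notion must insist on at least \emph{two} distinct preimages per color, which in turn forces $|A|>r$; this is precisely the content of the first half of Remark~\ref{extremal}. Given this care, the two pigeonhole steps assemble into a clean proof and simultaneously explain the sharpness of both parameters.
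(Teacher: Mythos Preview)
Your proposal is correct and essentially identical to the paper's proof: both apply pigeonhole once (using $|A|=r+1>r$) to assign each $b\in B$ a popular color $\phi(b)$, then pigeonhole again (using $|B|=rk+1$) to find a large fiber $B'=\phi^{-1}(i^*)$, and finally exploit the two-preimage property to avoid the collision $a_1=a_2$ when verifying matchability. Even your discussion of why $|A|>r$ is essential matches the paper's treatment.
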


\begin{proof}[Proof of Theorem~\ref{main}] Fix any $c\in \CC_{n;r}$, and set $\ell = \lfloor n/(r^2+1)\rfloor$. For $t=1,\dots,\ell$, we have the discrete interval $E_t =\{t(r^2+1)-r^2,t(r^2+1)-r^2+1,\dots,t(r^2+1)\}$.

We will now proceed by induction to find an $(r+1)$-set $U_t\in E_t^{(r+1)}$ such that for $u\in U_t^{(2)}$, there exists a twin $I,J$ of length $t$ with $\{\max(I),\max(J)\} = u$.

For $t=1$, we may take $U_1 = [r+1]$, as any pair of distinct singletons is a twin.

Then for $t=2,\dots,\ell$, we can invoke Lemma~\ref{matchable} with $A = U_{t-1},B= E_t$ to find $B'\in E_t^{(r+1)}$ satisfying the conditions of the lemma. We shall simply take $U_t = B'$. To check $U_t$ satisfies our inductive assumptions, it suffices to consider $u\in U_t^{(2)}$ and confirm there is some twin $I',J'$ of length $t$ with $\{\max(I'),\max(J')\} = u$.

By definition of $B'$, there exists $v\in U_{t-1}^{(2)}$ such that $u,v$ are $c$-matchable. By our assumptions on $U_{t-1}$, there must exists a twin $I,J$ of length $t-1$ with $\{\max(I),\max(J)\} = v$. Since $u,v$ are $c$-matchable and $\min(u)\ge \min(E_t)>\max(E_{t-1})\ge  \max(v)$, we can apply Lemma~\ref{extension} to deduce that there is in fact a twin $I',J'$ of length $t$ with $\{\max(I'),\max(J')\} = u$.
 
So, we see the induction goes through for all $t\in [\ell]$. Consequently, we see that $f(c)\ge \ell = \lfloor n/(r^2+1)\rfloor = n/(r^2+1)-O_r(1)$ as desired.\end{proof}

\section{Doing better with two colors}\label{improvement}

Here we provide a slightly ad hoc argument that improves our bound for $F_2(n)$. The idea is that the conclusion of Lemma~\ref{matchable} (with $r=k=2$) should still hold when we delete an edge from $K_{3,5}$. Meanwhile in the proof of Theorem~\ref{main}, we don't need to take the sets $U_1,\dots,U_\ell$ to be completely disjoint. So, by being a bit more careful, we can take $E_{t+1}$ to intersect one index in $U_t$ and still have things work, and now at each step we only expose 4 rather than 5 new indices.

\begin{proof}[Proof of Theorem~\ref{2better}]Fix $c\in \CC_{n;2}$.

For ease of notation, let $G_t$ be the graph on vertex set $[n]$, with $e \in [n]^{(2)}$ being an edge if there exists a twin $I,J$ of length $t$ such that $\{\max(I),\max(J)\} = e$. We shall use the following corollary of Lemma~\ref{extension}.
\begin{prp}\label{basic} Let $e,e'\in [n]^{(2)}$ be such that:
\begin{itemize}
    \item $e\in E(G_t)$;
    \item $\max(e)<\min(e')$;
    \item $e,e'$ are $c$-matchable.
\end{itemize}Then $e'\in E(G_{t+1})$.
\end{prp}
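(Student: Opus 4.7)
The proof is a direct translation of Lemma~\ref{extension} into the graph notation $G_t$, so it should be very short and there isn't really a substantive obstacle.

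The plan is the following. First I would unpack the hypothesis $e \in E(G_t)$: by definition of $G_t$, this means there exists a twin $I,J$ of length $t$ with $\{\max(I),\max(J)\} = e$. Then I would set up the application of Lemma~\ref{extension} by letting $u := e$ and $v := e'$ in the notation of that lemma. The hypothesis $\max(u) < \min(v)$ of Lemma~\ref{extension} becomes $\max(e) < \min(e')$, which is given; and the hypothesis that $u,v$ are $c$-matchable is also given.

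Lemma~\ref{extension} then directly produces a twin $I',J'$ of length $t+1$ satisfying $\{\max(I'),\max(J')\} = v = e'$. By the definition of $G_{t+1}$, this is precisely the statement that $e' \in E(G_{t+1})$, which is the desired conclusion. The whole proof is essentially just a change of language from ``there exists a twin of length $t$ ending at the pair $e$'' to ``$e \in E(G_t)$'', so no further work is needed. I expect the writeup to be two or three sentences, ending with a reference to Lemma~\ref{extension}.
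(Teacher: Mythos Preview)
Your proposal is correct and matches the paper's approach exactly: the paper simply declares this proposition to be a corollary of Lemma~\ref{extension} without writing out a proof, and your argument is precisely the unpacking of that corollary via the definition of $G_t$.
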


For $t\le n/4$, we shall find a triple $U_t \in [4t]^{(3)}$ such that $u\in E(G_t)$ for each $u\in U_t^{(2)}$ (i.e., $U_t$ induces a triangle in $E(G_t)$).

For $t = 1$, we can simply take $U_1 = \{1,2,3\}$, as $G_1$ is a clique.

Now suppose that we have some $U_t\in [n-4]^{(3)}$ which induces a triangle in $G_t$. We will find $U_{t+1}\in [n]^{(3)}$ with $\max(U_{t+1}) \le 4+\max(U_t)$, such that $U_{t+1}$ induces a triangle in $G_{t+1}$. By induction, it shall follow that $F_2(n) \ge \lfloor n/4\rfloor = n/4-O(1)$, as desired.

Let $ \{x<y<z\} = U_t$. Consider the discrete interval $Z = \{z,z+1,\dots,z+4\}$. We partition $Z$ into three sets,
\[A := \{l\in Z: c(\{x,l\}) =c(\{y,l\})= 1\}, \]
\[B := \{l\in Z: c(\{x,l\}) =c(\{y,l\})= 2\}.\]
\[S := \{l\in Z: c(\{x,l\}) \neq c(\{y,l\})\}.\]
\noindent By Proposition~\ref{basic}, it is clear that $G_{t+1}[A],G_{t+1}[B]$ are each cliques.

We will now finish by considering a few cases.

\textbf{Case 1} ($|S| = 0$): Here $|A\cup B| = 5$, thus we get $\max\{|A|,|B|\}\ge 3$ by pigeonhole. WLOG, assume $|A| \ge 3$. Since $G_{t+1}[A]$ is a clique, taking any $U_{t+1}\in A^{(3)}$ will induce a triangle.

\textbf{Case 2} ($|S| \in \{1,2\}$): Here $|A\cup B| \ge 3$, thus $\max\{|A|,|B|\}\ge 2$ by pigeonhole. WLOG, assume $|A|\ge 2$, and fix distinct $a,a'\in A$. Also, fix any $s\in S$ (which is possible as $|S|>0$). We shall take $U_{t+1} = \{a,a',s\}$.

Indeed, $\{a,a'\} \in E(G_{t+1})$ as $G_{t+1}[A]$ is a clique. Meanwhile, since $s\in S$, there exists $o\in \{x,y\}$ such that $c(\{o,s\}) = 1$. Thus taking $o^* = \{x,y\}\setminus \{o\}$, we have that $(o,s),(o^*,a)$ and $(o,s),(o^*,a')$ are $c$-matchings, implying $\{s,a\},\{s,a'\}\in E(G_{t+1})$ by Proposition~\ref{basic} as desired. Thus, we see $U_{t+1}$ induces a triangle.

\textbf{Case 3} ($|S| \ge 3$): It follows that we can pick distinct $s,s'\in S\setminus \{z\}$. Next, pick any $l\in Z\setminus \{s,s',z\}$. We take $U_{t+1} = \{s,s',l\}$.

We shall prove that for any distinct $s,l\in Z\setminus \{z\}$ with $s\in S$, that $\{s,l\}\in E(G_{t+1})$. This will clearly imply that $U_{t+1}$ induces a triangle.

Now, since $s\in S$, there exists $o\in \{x,y\}$ such that $c(\{o,s\}) = c(\{z,l\})$. Whence, $(o,z),(s,l)$ is a $c$-matching. Furthermore, since $s,l\in Z\setminus \{z\}$, we have that $\min\{s,l\}>z = \max\{o,z\}$, allowing us to invoke Proposition~\ref{basic} and deduce that $\{s,l\}\in E(G_{t+1})$ as desired.\end{proof}

\section{Some upper bounds}\label{upper bounds}

In this section, we improve the upper bounds for $F_r(n)$ for various $r$. Our proofs are reminisicant of the methods in a paper of Bukh and Guruswami \cite{guruswami} (see also their improved result with H\aa stad \cite{guruswami2}), where they construct large sets of strings no two of which have a ``long common subsequence''. Essentially, the idea will be to take a random string $x\in [R]^m$, where $R$ is some large number of colors so that we expect $x$ to only have very short twins. Then, we will encode $x$ as some edge-coloring $c_x:[n]^{(2)}\to [r]$, where we use fewer colors but still do not create particularly long twins.

We require the following bounds, which all follow from first moment considerations.

\begin{prp}\label{randomstrings}\cite[Theorem~4]{bukh}Let $x$ be a uniformly random $r$-ary string of length $n$. Asymptotically almost surely (as $n\to \infty)$, $x$ lacks any string-twins of length $O(n/\sqrt{r})$ (for some absolute constant independent of $r$).
\end{prp}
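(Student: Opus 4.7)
The plan is to apply a simple first-moment / union-bound argument directly to the uniformly random string $x \in [r]^n$. Fix any ordered pair of disjoint subsets $I = \{i_1 < \cdots < i_\ell\}$, $J = \{j_1 < \cdots < j_\ell\}$ of $[n]$. The event that $(I,J)$ is a string-twin is the conjunction of the $\ell$ equalities $x(i_t) = x(j_t)$ for $t = 1, \dots, \ell$. Since $I$ and $J$ are disjoint, all $2\ell$ positions $i_1, j_1, \dots, i_\ell, j_\ell$ are distinct, so the letters $x(\cdot)$ at these positions are i.i.d.\ uniform elements of $[r]$. Hence the $\ell$ equality events are mutually independent, each with probability $1/r$, and $(I,J)$ is a twin with probability exactly $r^{-\ell}$.

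Next I would bound the number of candidate pairs crudely by $\binom{n}{\ell}^2 \le (en/\ell)^{2\ell}$ and apply the union bound. This yields
\[
\mathbb{E}\bigl[\#\{\text{string-twins of length }\ell\text{ in }x\}\bigr] \;\le\; \left(\frac{en}{\ell}\right)^{2\ell} r^{-\ell} \;=\; \left(\frac{e^2 n^2}{\ell^2 r}\right)^{\ell}.
\]
Setting $\ell = \lceil C n/\sqrt{r}\rceil$ for an absolute constant $C > e$ (e.g.\ $C = 3$) makes the base $e^2 n^2/(\ell^2 r) \le e^2/C^2$ strictly less than $1$, so the right-hand side decays geometrically in $\ell$ and tends to $0$ (assuming $n/\sqrt{r}\to\infty$, without which the claim is vacuous). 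By Markov's inequality, asymptotically almost surely $x$ contains no twin of length $\lceil Cn/\sqrt{r}\rceil$; and since any longer twin would truncate to one of that exact length, $x$ a.a.s.\ has no string-twin of length $\ge \lceil C n/\sqrt{r}\rceil$ whatsoever.

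I do not expect any real obstacle. The only point requiring care is the independence of the $\ell$ equality events, which rests entirely on disjointness of $I$ and $J$ guaranteeing that all $2\ell$ involved positions are distinct (absent disjointness the events can coincide or correlate). Once that is observed, the rest is a textbook first-moment estimate, and the constant $C$ can be fixed once and for all independently of $r$, matching the hypothesis that the hidden constant in the statement is absolute.
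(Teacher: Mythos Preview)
Your argument is correct and is precisely the standard first-moment computation. The paper itself does not prove this proposition at all: it simply quotes it as \cite[Theorem~4]{bukh} (and remarks it also follows from \cite[Theorem~3]{axenovich}), so there is no in-paper proof to compare against. Your write-up is essentially the argument behind the cited result; the only cosmetic point is that for small $r$ the threshold $\lceil Cn/\sqrt{r}\rceil$ may exceed $n/2$, in which case the conclusion is vacuously true, which you implicitly handle.
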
\noindent We note that Proposition~\ref{randomstrings} is also a corollary of \cite[Theorem~3]{axenovich}.

\begin{defn}Given permutations $\pi,\pi'\in S_r$ (which we treat as bijections from $[r]\to [r]$), we define $\LCS(\pi,\pi')$ to be the largest $\ell$ such that there exists $A = \{a_1<\dots<a_\ell\},B=\{b_1<\dots<b_\ell\} \subset [r]$ (not necessarily disjoint) with $\pi(a_t) = \pi'(b_t)$ for $t=1,\dots,\ell$.

\end{defn}

\begin{prp}\label{randomperms}Let $\pi,\pi'\in S_r$ be chosen uniformly at random. We have $\P(\LCS(\pi,\pi') > 3\sqrt{r})\le  r^{-\omega(1)}$.
\begin{proof}We shall write $k := \lceil 3\sqrt{r}\rceil$.

For $A = \{a_1<\dots<a_\ell\},B = \{b_1<\dots<b_\ell\}\subset [r]$, let $Y_{A,B}$ be the indicator function of the event that $\pi(a_t) = \pi'(b_t)$ for each $t=1,\dots,\ell$. We then let
\[Y:= \sum_{\substack{A,B\subset [r]:\\ |A|=|B|=k}}Y_{A,B}.\]\edit{perhaps it is better to write $A,B\in [r]^{(k)}$?} It is clear that $\P(\LCS(\pi,\pi')\ge k)\le \E[Y]$.

Now for any choice of $A,B\in [r]^{(k)}$, we have \[\E[Y_{A,B}] = \frac{1}{\binom{r}{k} k!}\](here $1/\binom{r}{k}$ is the probability $\pi(A)= \pi(B)$, and $1/k!$ is the probability that our event holds conditioned on this). Meanwhile, there are only $\binom{r}{k}^2$ choices of sets $A,B\in [r]^{(k)}$. Whence, we see that
\[\E[Y] = \#(A,B\in [r]^{(k)})\frac{1}{\binom{r}{k}k!} = \binom{r}{k}\frac{1}{k!}\le \frac{r^k}{k!k!} = (1+o(1))\frac{1}{2\pi k}\left(\frac{e^2r}{k^2}\right)^k\](applying Stirling's approximation). By our choice of $k$, the LHS is at most $(e^2/9)^{-\sqrt{r}} = \exp(-\Omega(\sqrt{r}))$ for large $r$, meaning $\P(\LCS(\pi,\pi')\ge k)$ decays super-polynomially as desired.
\end{proof}
\end{prp}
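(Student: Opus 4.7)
The plan is a first-moment union bound. I would set $k := \lceil 3\sqrt{r}\rceil$ and aim to show $\P(\LCS(\pi,\pi') \ge k) = r^{-\omega(1)}$. For each ordered pair $(A,B) \in [r]^{(k)} \times [r]^{(k)}$, let $Y_{A,B}$ be the indicator that, writing $A = \{a_1<\dots<a_k\}$ and $B = \{b_1<\dots<b_k\}$, we have $\pi(a_t) = \pi'(b_t)$ for every $t \in [k]$. Any witness to $\LCS(\pi,\pi') \ge k$ produces such a pair, so by Markov it suffices to bound $\E[Y]$ where $Y := \sum_{A,B} Y_{A,B}$.

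For a fixed pair $(A,B)$, since $\pi$ is uniform on $S_r$, the tuple $(\pi(a_1),\dots,\pi(a_k))$ is a uniformly random injection $[k]\to [r]$, and independently so is $(\pi'(b_1),\dots,\pi'(b_k))$. Thus $\P(Y_{A,B}=1)$ equals the reciprocal of the number of injections $[k]\to [r]$, which is $(r-k)!/r! = 1/(\binom{r}{k}k!)$. Summing over the $\binom{r}{k}^2$ pairs yields $\E[Y] = \binom{r}{k}/k! \le r^k/(k!)^2$. Applying Stirling in the form $k! \ge (k/e)^k$ bounds this by $(e^2 r/k^2)^k$, and since $k \ge 3\sqrt{r}$ the base is at most $e^2/9 < 1$, so $\E[Y] \le \exp(-\Omega(\sqrt{r}))$, which decays faster than any polynomial in $r^{-1}$.

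There is essentially no obstacle here: the argument is a clean first-moment computation, and the only design choice is picking the threshold large enough that the base of the resulting exponential falls strictly below $1$. That requires $k > e\sqrt{r}$, so any constant $c > e$ in place of $3$ would deliver the same $r^{-\omega(1)}$ bound; the choice $c = 3$ is convenient because $e^2/9$ is comfortably subunit. The one sanity check worth flagging is that reindexing $A, B$ in increasing order is harmless — we lose nothing by restricting to witnesses where the common-subsequence indexings coincide with the natural order, since $\LCS$ is defined exactly this way.
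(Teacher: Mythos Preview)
Your proposal is correct and follows essentially the same first-moment argument as the paper: define the same indicators $Y_{A,B}$, compute $\E[Y_{A,B}] = 1/(\binom{r}{k}k!)$, sum over $\binom{r}{k}^2$ pairs to get $\E[Y] = \binom{r}{k}/k! \le (e^2 r/k^2)^k$, and conclude via $k \ge 3\sqrt{r}$. The only cosmetic difference is that you use the crude Stirling bound $k! \ge (k/e)^k$ rather than the full asymptotic form, which is entirely sufficient here.
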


Now, we start with the easier of our two upper bounds, which improves things for sufficiently large $r$.

\begin{repthm}{general upper} We have that
\[F_r(n)\le O(n/r).\]
\end{repthm}
\begin{proof}Assume $r$ is sufficiently large. Let $r^* = \lfloor r/2\rfloor$, $R = r^2$. We shall consider $n = mr'$.

Pick $x\in [r^*]^m,y\in [R]^m$ uniformly at random. Also, pick $\pi_1,\dots, \pi_R \in S_{r^*}$ uniformly at random.

By Propositions~\ref{randomstrings} and \ref{randomperms}, all of the following hold with positive probability (by a union bound):
\begin{itemize}
    \item $f^{\strng}(x) = O(m/\sqrt{r})$;
    \item $f^{\strng}(y) = O(m/r)$;
    \item we have $\LCS(\pi_i,\pi_j) = O(\sqrt{r})$ for each distinct $i,j \in [R]$.
\end{itemize}
We condition on such an outcome, and use this to construct our $c\in \CC_{n;r}$.

We define the maps $\Phi:[n]\to [m],\Psi:[n]\to [r^*]$ so that $\Phi(k) = \lceil k/r^*\rceil$ and $k = (\Phi(k)-1)r^*+\Psi(k)$.

Consider $k<k'\in [n]= [r^*m]$. We take
\[c(\{k,k'\}) = \begin{cases}x(\Phi(k))& \textrm{if } \Phi(k)<\Phi(k');\\
r^*+\pi_{y(\Phi(k))}(\Psi(k))&\textrm{otherwise}.
\end{cases}\]\noindent We call the first case of our definition the `global rule', and our second case the `local rule'.

Obviously, $c(\{k,k'\})$ always takes some value in $[2r^*]\subset [r]$, thus $c\in \CC_{n;r}$. We will prove that \begin{equation}\label{twinbound}f(c)\le m+2f^{\strng}(y)r+(2f^{\strng}(x)+1)(\max_{i\neq j}\{\LCS(\pi_i,\pi_j)\}+1),\end{equation}implying that $f(c)= O(m) = O(n/r)$ by our assumptions on $x,y,\pi_1,\dots,\pi_R$. Hence, we will be done after establishing Eq.~\ref{twinbound}.

Without further delay, we prove that $f(c)$ is small. Consider any twins $I= \{i_1<\dots<i_\ell\},J=\{j_1<\dots<j_\ell\}$ of $c$. For $t\in [\ell]$, we define $\phi_I(t) = \Phi(i_t)$ and $\psi_I(t) = \Psi(i_t)$. We similarly define $\phi_J,\psi_J$ by replacing `$i_t$' by `$j_t$'.

As $I,J$ are twins, we must have that $\phi_I(t) = \phi_I(t+1)$ if and only if $\phi_J(t) = \phi_J(t+1)$ (otherwise one of $c(\{i_t,i_{t+1}\}),c(\{j_t,j_{t+1}\})$ will belong to $[r^*]$ and the other will belong to $r^*+[r^*]$). 

It is clear that $\phi_I,\phi_J$ are non-decreasing, as they are obtained from apply the ceiling function to increasing sequences.

Next let $l =|\Phi(I)|$. We define $E_1, \dots, E_l\subset [\ell]$ so that $t\in E_h$ if and only if $\phi_I(t)$ is the $h$-th smallest element of $\Phi(I)$. We record that these sets:
\begin{itemize}
    \item partition $[\ell]$ (clear);
    \item are all intervals (as $\phi_I$ is non-decreasing);
    \item each have size at most $r^*$ (as $\Phi$ is a $r^*$-to-1 function).
\end{itemize}\noindent Also, let $A := \Phi(I) = \{a_1<\dots<a_l\} , B:= \Phi(J) = \{b_1<\dots<b_l\}$.

Let $H_1 := \{h\in [l]: a_h = b_h\},H_2 := \{h\in [l]\setminus H_1: y(a_h) = y(b_h)\},H_3 = [l]\setminus (H_1 \cup H_2)$. We will show that the indices contributed by each of the three parts is appropriately bounded.

\begin{prp}\label{H1bound}We have $|H_1|\le m$ and $|E_h| \le 1$ for $h\in H_1$.
\begin{proof} Since $H_1\subset [l]$ and $l =|\Phi(I)|$ we have $|H_1| \le |\im(\Phi)| = |[m]| = m$.

It remains to show that $h\in H_1$ implies that $|E_h| \le 1$.

Supposing otherwise, we'd have $\phi_I(t) = \phi_I(t+1) =a = \phi_J(t+1) = \phi_J(t)$ for some $t\in E_h$ and $a\in [m]$. Thus our local rule gives
\[c(\{i_t,i_{t+1}\}) = r^*+\pi_{y(a)}(\Psi(i_t))\]
while
\[c(\{j_t,j_{t+1}\}) = r^*+\pi_{y(a)}(\Psi(j_t)).\]Since $\pi_{y(a)}$ is injective, we should have $\Psi(i_t)=\Psi(j_t) = b$ (as $I,J$ are twins), which implies that $i_t = (a-1)r^*+b = j_t$. But then $I,J$ are not disjoint (and hence not twins), contradiction.\end{proof}
\end{prp}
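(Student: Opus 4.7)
The first bound is immediate from the containment $H_1\subseteq [l]$ together with $l=|\Phi(I)|\le |\im(\Phi)|=m$. So the real content is the second assertion.

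For the second assertion, the plan is to argue by contradiction: suppose $h\in H_1$ and $|E_h|\ge 2$, and derive that $I$ and $J$ share an element. The first observation I would record is that because $I,J$ are twins, the condition ``$\phi_I(t)=\phi_I(t+1)$ iff $\phi_J(t)=\phi_J(t+1)$'' (already noted in the main text) means that $\phi_I$ and $\phi_J$ have exactly the same ``step structure'' on $[\ell]$. Combined with the fact that $\phi_I,\phi_J$ are non-decreasing, this lets me conclude that the interval $E_h$ defined via $\phi_I$ is also the $h$-th level set of $\phi_J$, so $\phi_J|_{E_h}\equiv b_h$. Since $h\in H_1$ means $a_h=b_h$, I set $a:=a_h=b_h$ and conclude $\phi_I|_{E_h}\equiv\phi_J|_{E_h}\equiv a$.

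Now pick any consecutive pair $t,t+1\in E_h$ (which exists by the assumption $|E_h|\ge 2$). Both edges $\{i_t,i_{t+1}\}$ and $\{j_t,j_{t+1}\}$ fall under the local rule because $\Phi(i_t)=\Phi(i_{t+1})=a$ and $\Phi(j_t)=\Phi(j_{t+1})=a$. Thus
\[c(\{i_t,i_{t+1}\})=r^*+\pi_{y(a)}(\Psi(i_t)),\qquad c(\{j_t,j_{t+1}\})=r^*+\pi_{y(a)}(\Psi(j_t)),\]
and the twin condition forces these to be equal. Since $\pi_{y(a)}$ is a permutation (hence injective), this yields $\Psi(i_t)=\Psi(j_t)$. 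Combining with $\Phi(i_t)=\Phi(j_t)=a$ and the factorization $k=(\Phi(k)-1)r^*+\Psi(k)$, I get $i_t=j_t$, contradicting the disjointness of $I$ and $J$.

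The only mildly subtle point is justifying that $\phi_J|_{E_h}\equiv b_h$ (i.e.\ that the $E_h$ partition built from $\phi_I$ really is also the level-set partition for $\phi_J$); everything else is bookkeeping. Once that identification is in place, the injectivity of $\pi_{y(a)}$ immediately closes the argument.
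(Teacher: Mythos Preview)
Your proof is correct and follows essentially the same route as the paper: bound $|H_1|$ trivially, then for $h\in H_1$ with $|E_h|\ge 2$ pick consecutive $t,t+1\in E_h$, apply the local rule on both sides with the same block index $a=a_h=b_h$, and use injectivity of $\pi_{y(a)}$ to force $i_t=j_t$. If anything, you are slightly more explicit than the paper in spelling out why $\phi_J|_{E_h}\equiv b_h$ (the paper just asserts $\phi_I(t)=\phi_I(t+1)=a=\phi_J(t+1)=\phi_J(t)$, relying on the earlier ``iff'' observation without further comment).
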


\begin{prp}\label{H2bound} We have $|H_2|\le 2f^{\strng}(y)$ and $|E_h|\le r^*$ for $h\in H_2$.
\begin{proof}As previously noted, the bound $|E_h|\le r^*$ holds for all $h\in [l]$, thus the second part is trivial.

Now, construct a graph $G$ on vertex set $[m]$,
with $E(G) = \{a_hb_h:h\in H_2\}$. Note that $G$ will not have any loops, since $H_2\subset [l]\setminus H_1$ ensures that $a_h\neq b_h$ for $h\in H_2$.

Next, as $a_\cdot,b_\cdot$ are increaing sequences, we have that the connected components of $G$ are all paths. Thus there is is some matching (collection of disjoint edges $M\subset E(G)$ with $|M| \ge e(G)/2 = |H_2|/2$. 

Take $H_M:= \{h\in H_2: a_hb_h \in M\}$, which has size $|M|\ge |H_2|/2$. We finish by showing that $A_M:=\{a_h:h\in H_M\},B_M:=\{b_h:h\in H_M\}$ is a string-twin of $y$, which immediately rearranges to give the desired bound $|H_2|\le 2|H_M| = 2|A_M| \le 2f^{\strng}(y)$ (here $|H_M| =|A_M|$ follows for the fact that $h\neq h'\implies a_h\neq a_{h'}$).

Since $M$ is a matching, we have that $A_M,B_M$ are disjoint. For $t\in [|M|]$, write $\eta_t^*$ (respectively $\alpha_t^*,\beta_t^*$) for the $t$-th smallest element of $H_M$ (respectively $A_M,B_M$). Since $h<h'$ implies $a_h<a_{h'}$ and $b_h<b_{h'}$, we have that $\alpha_t = a_{\eta_t}$ and $\beta_t = b_{\eta_t}$. Whence, we have that $y(\alpha_t) = y(\beta_t)$ for $t=1,\dots,|M|$ (as $\eta_t\in H_2$ implies that $y(a_t) = y(b_t)$). 

So, we conclude that $A_M,B_M$ is a string twin of $y$, as desired.
\end{proof}
\end{prp}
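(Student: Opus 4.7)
The plan is to handle the two bounds separately. The bound $|E_h| \le r^*$ for $h \in H_2$ is immediate from the definition: since $\Phi$ is $r^*$-to-$1$ and $E_h$ consists of those $t \in [\ell]$ with $\Phi(i_t)$ equal to the $h$-th smallest element of $\Phi(I)$, we have $|E_h| \le |I \cap \Phi^{-1}(a_h)| \le r^*$. This actually holds for every $h \in [l]$, not just for $h \in H_2$, so restricting the claim to $H_2$ is merely notational convenience.

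The substantive content is the bound $|H_2| \le 2 f^{\strng}(y)$, which I would prove by extracting a string-twin of $y$ from the collection of pairs $\{(a_h, b_h) : h \in H_2\}$. By definition of $H_2$, each such pair satisfies $a_h \ne b_h$ and $y(a_h) = y(b_h)$. The obstacle is that distinct pairs may share vertices (some $a_h$ may equal some $b_{h'}$), so one cannot directly read off a string-twin from the whole collection. To overcome this, my plan is to introduce an auxiliary graph $G$ on vertex set $[m]$ whose edges are the $2$-sets $\{a_h, b_h\}$ for $h \in H_2$. The structural observation is that $G$ has maximum degree at most $2$: since the sequences $a_1 < \cdots < a_l$ and $b_1 < \cdots < b_l$ are strictly increasing, any fixed $v \in [m]$ occurs as an $a_h$ for at most one $h$ and as a $b_h$ for at most one $h$. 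Hence every component of $G$ is a path or a cycle, and a standard greedy argument produces a matching $M \subseteq E(G)$ with $|M| \ge e(G)/2 = |H_2|/2$.

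From $M$ I would set $H_M := \{h \in H_2 : \{a_h, b_h\} \in M\}$ and $A_M := \{a_h : h \in H_M\}$, $B_M := \{b_h : h \in H_M\}$; disjointness of $A_M$ and $B_M$ is immediate from $M$ being a matching. The last step is to check that $(A_M, B_M)$ is a string-twin of $y$, which reduces to verifying that sorting $A_M$ and $B_M$ by size preserves the pairing inherited from the $h$-index. This holds because both maps $h \mapsto a_h$ and $h \mapsto b_h$ are strictly increasing, so the $t$-th smallest element of $A_M$ and the $t$-th smallest of $B_M$ both come from the $t$-th smallest $h \in H_M$; the equality $y(a_h) = y(b_h)$ then yields $y|_{A_M} = y|_{B_M}$. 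Concluding $|H_M| \le f^{\strng}(y)$, we get $|H_2| \le 2|H_M| \le 2 f^{\strng}(y)$. The main obstacle is not conceptual but organizational: one must carefully unwind the three layers of indexing ($h$ ordering $A, B$; matching selecting a subset of $h$'s; resorting $A_M, B_M$ to check the string-twin property) to be sure that the pairing survives.
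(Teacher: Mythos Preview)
Your proposal is correct and follows essentially the same route as the paper: build the auxiliary graph on $[m]$ with edges $\{a_h,b_h\}$ for $h\in H_2$, use the strict monotonicity of $h\mapsto a_h$ and $h\mapsto b_h$ to bound degrees, extract a matching of size $\ge |H_2|/2$, and verify that the matched pairs give a string-twin of $y$. The only cosmetic difference is that the paper asserts the components are paths while you allow paths or cycles; in fact cycles cannot occur (a cycle would force the order type of $(v_1,\dots,v_k)$ to equal that of its cyclic shift), but your weaker statement already suffices for the matching bound.
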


\begin{prp}\label{H3bound}We have that $|H_3|\le 2f^{\strng}(x)+1$ and $|E_h|\le \max_{i\neq j}\{\LCS(\pi_i,\pi_j)\}+1$ for $h\in H_3$.
\begin{proof}We first bound $|E_h|$ assuming $h\in H_3$. As $h\not \in H_2$, we have that $y(a_h)\neq y(b_h)$. So the desired bound will follow from showing that $|E_h|\le \LCS(\pi_{y(a_h)},\pi_{y(b_h)})+1$ always holds. 
Let $\{i_1'<\dots<i_q'\} = \{i_t:t\in E_h\}, \{j_1' <\dots<j_q'\} = \{j_t:t\in E_h\}$. Also write $\tau = \pi_{y(a_h)},\sigma = \pi_{y(b_h)}$.

By our local rule, we must have that $\tau(\Psi(i_t')) = \sigma(\Psi(j_t'))$ for each $t\in [q-1]$. Whence, we see that $q-1\le \LCS(\tau,\sigma)$ (as $\psi_I|_{E_h}$ and $\psi_J|_{E_h}$ are both increasing).

It remains to bound $|H_3|$. Due to our global rule, for $t\in [l-1]$, we must have that $x(a_t)=x(b_t)$. Now we define $H' =[l-1]\setminus H_1$. Clearly $|H'|\le |H_3|+1$, as $H_3\subset [l]\setminus H_1$. 

Finally, by repeating the argument from Proposition~\ref{H2bound}, we can find $H_M'\subset H'$ with $|H_M'|\ge |H'|/2$ so that $\{a_h:h\in H_M'\},\{b_h:h\in H_M'\}$ is a string-twin of $x$. Some minor rearranging gives that $|H_3| \ge 2f^{sting}(x)+1$, completing the proof.\end{proof}
\end{prp}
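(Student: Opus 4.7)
The proposition splits into two essentially independent bounds. For the bound on $|E_h|$ with $h \in H_3$: the key observation is that $H_3 \subseteq [l] \setminus H_2$ means $y(a_h) \ne y(b_h)$, so the two local-rule permutations $\pi_{y(a_h)}$ and $\pi_{y(b_h)}$ are distinct, which is precisely the setting where the random-permutations bound $\LCS(\pi_i, \pi_j)$ applies. Inside $E_h$, every consecutive pair $(i_t, i_{t+1})$ lies entirely in the fiber $\Phi^{-1}(a_h)$, so $c(\{i_t, i_{t+1}\})$ is given by the local rule as $r^* + \pi_{y(a_h)}(\Psi(i_t))$, and likewise on the $J$ side. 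The twin condition forces $\pi_{y(a_h)}(\Psi(i_t)) = \pi_{y(b_h)}(\Psi(j_t))$ across $E_h$. Since $\Psi$ restricted to the ordered enumeration of $\{i_t : t \in E_h\}$ (respectively of $\{j_t : t\in E_h\}$) is itself strictly increasing in $[r^*]$, these equalities directly exhibit a common subsequence of length $|E_h| - 1$ between $\pi_{y(a_h)}$ and $\pi_{y(b_h)}$, giving $|E_h| \le \LCS(\pi_{y(a_h)}, \pi_{y(b_h)}) + 1$.

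For the bound on $|H_3|$: now the relevant rule is the global one, which applies precisely at the boundary step between consecutive blocks $E_h$ and $E_{h+1}$. There $\phi_I(t) = a_h < a_{h+1}$, so matching the colors across $I$ and $J$ yields $x(a_h) = x(b_h)$ for every $h \in [l-1]$. Setting $H' := [l-1] \setminus H_1$, I would first note $H_3 \subseteq ([l-1]\setminus H_1) \cup \{l\} = H' \cup \{l\}$, which reduces matters to proving $|H'| \le 2 f^{\strng}(x)$. To do this I would recycle the matching argument from Proposition~\ref{H2bound} essentially verbatim: build a graph on $[m]$ whose edges are $\{a_h b_h : h \in H'\}$ (loop-free since $H_1$ is removed), observe that its components are paths because $a_\cdot$ and $b_\cdot$ are increasing, extract a matching $M$ with $|M| \ge |H'|/2$, and verify that the corresponding subsequences of $x$ form a genuine string-twin---disjointness comes from $M$ being a matching, the ordering from monotonicity of $a_\cdot, b_\cdot$, and the equality of letters from $x(a_h) = x(b_h)$.

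The only real obstacle is bookkeeping: keeping the three sets $H_1, H_2, H_3$ straight, respecting the off-by-one that arises because the global rule only sees the $l-1$ gaps between blocks (never the block indexed by $l$), and confirming that the matching-to-string-twin reduction from the proof of Proposition~\ref{H2bound} transports without modification from $y$ to $x$. No genuinely new idea beyond what appeared in the two preceding propositions seems to be needed.
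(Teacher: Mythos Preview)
Your proposal is correct and follows essentially the same argument as the paper: use the local rule inside $E_h$ to extract a common subsequence of the two distinct permutations $\pi_{y(a_h)},\pi_{y(b_h)}$ of length $|E_h|-1$, and use the global rule on the $l-1$ block boundaries together with the matching argument of Proposition~\ref{H2bound} to bound $|H'|=|[l-1]\setminus H_1|$ by $2f^{\strng}(x)$. You even have the inequality $|H_3|\le |H'|+1$ in the correct direction, whereas the paper's write-up contains a typo there (and another in the final displayed inequality).
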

Finally, it is clear to see that Eq.~\ref{twinbound} holds by combining Propositions~\ref{H1bound}, \ref{H2bound} and \ref{H3bound}, along with the fact that $\ell = \sum_{k=1}^3 \sum_{h\in H_k}|E_h|$. So we are done.
\end{proof}

We shall now prove the following upper bound, refuting a conjecture of Dudek, Grytczuk, and Ruzci\'nski.

\begin{repthm}{weak upper}There exists some absolute constant $\xi>0$ such that 
\[F_2(n)\le F_{weak}(n)\le (1/2-\xi+o(1))n.\]
\begin{rmk}We have made no effort to optimize the constant $\xi$ given by our argument.

For those curious, one may use $\ep = 1/400000$ and $\delta = 3/10000$ in the below arguments, which shows that $\xi> 3^{-2^{10001}}/400000$ is attainable. By using sharper estimates and being less sloppy, one could probably show $\xi >2^{-200}$. However, proving $\xi > 2^{-20}$ likely requires new ideas. \hide{See Section~\ref{conclusion} for more discussion.} \end{rmk}
\begin{proof}We will fix some sufficiently large integer $r$. For each letter $l\in [r]$, we assign the weight $w(l):= 3^l$. 

We shall consider a random $r$-ary string $x$ of length $m$. For $k=1,\dots,m$, let $L_k = \sum_{t=1}^k w(x_t)$. Also, set $L_0 =0$. 

Now given $x$, we form a permutation $\pi_x$ of length $L_m$. Specifically, for $k\in [m]$, we take
\[\pi_x|_{(L_{k-1},L_k]} = L_k,L_k-1,\dots, L_{k-1}+1\]
(in other words, $\pi|_x$ is the ``skew-sum'' $D_{w(x_1)} \ominus D_{w(x_2)} \dots \ominus D_{w(x_m)}$ where $D_l$ denote the decreasing permutation of length $l$).

For $i\in [n]$, let $k_i$ be the smallest $k$ such that $i\le L_k$. Also, let $E_k := [L_k]\setminus [L_{k-1}]$.

Now, $\pi_x$ corresponds to the 2-coloring $c\in \CC_{n;2}$ where for $i<j$, $c(\{i,j\}) =1$ if and only if $k_i = k_j$ (recall the construction given in Lemma~\ref{weak reduction}).

Now, $n\le 3^r m$, thus we wish to prove that for some $\ep>0$, that for all twins $I,J$ of $c$, we have $|[n]\setminus (I\cup J)| \ge \ep m$. Thus, given a pair of twins $I,J$, let $K_{I,J} := \{k: I\cup J\not \supset E_k\}$. We shall argue that a.a.s., $|K_{I,J}| \ge \ep m$.

Given a set $S = \{s_1<\dots<s_\ell\}$, we define a map $\phi = \phi_S:[\ell]\to [m]$ by setting $\phi(t) = k_{s_t}$ for $t =1,\dots,\ell$.

We make a few remarks. Obviously for any $S$, we have that $\phi_S$ is increasing (i.e., $\phi_S(t)\le \phi_S(t+1)$ for $t<|S|$). Also, if $I,J$ are twins, then we must have that $\phi_I(t)<\phi_I(t+1)$ if and only if $\phi_J(t)<\phi_J(t+1)$ (by definition of $c$). 

Now given a twin $I,J$, we define a graph $G = G_{I,J}$, where we will allow loops but won't care about the multiplicity of edges. Specifically, $G$ will have vertex set $[m]$ and edge set $\{\phi_I(t)\phi_J(t):t\in [\ell]\}$. We observe that the components of $G$ are all either singletons, loops, or paths $P$ with some vertex set $V = \{v_1<\dots<v_q\}$ with edge set $\{v_tv_{t+1}:t\in [q-1]\}$. Additionally, we observe that $G_{I,J}$ actually only depends on $\im \phi_I,\im \phi_J$. 

Thus, given $X,Y\subset [m]$, we write $G_{(X,Y)}$ to denote the unique graph $G_{I,J}$ where $I,J$ is a twin (of $c$) with $\im \phi_I = X,\im \phi_J=Y$ (or the empty graph if no such $I,J$ exist). Furthermore, let $\chi(X,Y)$ count the number of connected components $G_{(X,Y)}$.

\hide{Fix $K = K_{I,J} \subset [m]$. Also, take $d = r^2$ and let $B = B_{I,J}$ be the set of $v\in [m]$ such that there is a path of length at most $d$ from $v$ to some $k\in K$. We have that $|B|\le (2d+1)|K|$, thus it will suffice to argue that $|B| \ge \ep m$ with high probability.}

Given subsets $X,Y\subset [m]$ let $\EE_{X,Y}$ be the event that there exists a twin $I,J$ of $c$ such that $\im \phi_I = X,\im \phi_J = Y$ where $|K_{I\cup J}|<\ep m$. We will prove two bounds on $\P(\EE_{X,Y})$.
\begin{prp}\label{firstmom}Consider any $(X,Y)\in \UU$.

We have that 
\[\P(\EE_{X,Y}) < 2^{\chi(X,Y)}r^{-(\chi(X,Y)-\ep m)}.\]
\end{prp}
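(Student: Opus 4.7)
The plan is to exploit independence of the covering conditions across components of $G_{(X,Y)}$ and then apply a union bound. For each connected component $C$ of $G_{(X,Y)}$, I would let $A_C$ be the event (over the random $x$) that there exist positive integer run-lengths $(m_h)$ on the edges of $C$ with $E_v\subset I\cup J$ for every $v\in V(C)$. Since $A_C$ depends only on the coordinates $(x_v)_{v\in V(C)}$, and distinct components have disjoint vertex sets in $[m]$, the events $\{A_C\}_C$ are mutually independent.

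I would then compute $\P(A_C)$ in each of the three possible component types. If $C=\{v\}$ is an isolated vertex (so $v\notin X\cup Y$), the block $E_v$ is disjoint from any twin and $\P(A_C)=0$. If $C$ is a loop at $v$ (so $a_h=b_h=v$ for some $h$), full coverage forces $2m_h=3^{x_v}$, impossible since $3^{x_v}$ is odd, giving $\P(A_C)=0$. If $C$ is a path on vertices $v_1<\dots<v_q$ (necessarily $q\ge 2$), with edge run-lengths $n_1,\dots,n_{q-1}$, the fact that the two endpoints lie in $X\triangle Y$ and all internal vertices in $X\cap Y$ shows that full coverage imposes the linear system
\[n_1=3^{x_{v_1}},\quad n_{s-1}+n_s=3^{x_{v_s}}\ \text{for}\ 1<s<q,\quad n_{q-1}=3^{x_{v_q}}.\]
The first $q-1$ equations determine $n_{q-1}$ deterministically from $x_{v_1},\dots,x_{v_{q-1}}$, and the last equation pins $3^{x_{v_q}}$ to one specific value, which is attained by at most one $x_{v_q}\in[r]$; thus $\P(A_C)\le 1/r$.

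Finally I would link $\EE_{X,Y}$ to the $A_C$. Because the run-lengths on different components can be optimised independently, the minimum of $|K_{I\cup J}|$ over valid twins with $\im\phi_I=X$, $\im\phi_J=Y$ is at least the number of components for which $A_C$ fails (each failed component contributes at least one uncovered block). Hence $\EE_{X,Y}$ forces at least $\chi(X,Y)-\ep m$ of the events $A_C$ to hold. A union bound over subsets of components of size $\chi(X,Y)-\ep m$, together with independence and $\P(A_C)\le 1/r$ (noting that any such subset containing a singleton or loop contributes $0$), yields
\[\P(\EE_{X,Y})\le \binom{\chi(X,Y)}{\chi(X,Y)-\ep m}\,r^{-(\chi(X,Y)-\ep m)}\le 2^{\chi(X,Y)}\,r^{-(\chi(X,Y)-\ep m)}.\]
The main technical point will be the path analysis: one must carefully justify the bipartite alternation ($X \setminus Y$, then alternating $X\cap Y$ vertices, ending in $Y \setminus X$) along each sorted path to derive the precise recursion, and verify that its terminal equation really leaves at most one admissible value of $x_{v_q}$. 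Everything else is routine bookkeeping.
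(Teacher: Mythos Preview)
Your proposal is correct and follows essentially the same approach as the paper: decompose $\kappa_{(X,Y)}$ as an independent sum over components, show $\P(\kappa_C=0)\le 1/r$ for every component, and finish with a union bound over subsets of components. The only notable difference is in the path case: the paper uses a $3$-adic divisibility argument (inductively $e_1\mid a_t$ but $3e_1\nmid a_t$, forcing $x_{v_1}=x_{v_q}$), whereas you condition on $x_{v_1},\dots,x_{v_{q-1}}$ and observe that the recursion determines $n_{q-1}$ and hence leaves at most one admissible value for $x_{v_q}$; both yield the same $1/r$ bound.
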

\begin{prp}\label{cherncor}Consider any $(X,Y)\in \UU$.

Suppose $m/3 - \chi(X,Y)= 8(\eta+\ep)m$ for some $\eta>0$, and that $r$ is even. Then we have that \[\P(\EE_{X,Y}) \le \exp(-288\eta^3m) < \exp(-\eta^3 m).\]

\end{prp}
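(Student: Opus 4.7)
The plan is to apply a Chernoff-type concentration inequality, strengthening the first-moment bound of Proposition~\ref{firstmom} via independent contributions from the path components of $G_{(X,Y)}$. Let $S$, $L$, and $P_q$ denote respectively the numbers of singleton, loop, and size-$q$ path components. Substituting $\chi = S + L + \sum_{q\ge 2} P_q$ and $m = S + L + \sum_{q\ge 2} qP_q$ yields the identity
\[m - 3\chi(X,Y) = -2(S+L) - P_2 + \sum_{q\ge 4}(q-3)P_q,\]
so the hypothesis forces $\sum_{q\ge 4}(q-3)P_q \ge 24(\eta+\ep)m$; a great deal of ``mass'' lies in long (size $\ge 4$) path components.

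For each non-trivial path $p$ in $G_{(X,Y)}$, I would define $\kappa(p)$ as the minimum of $|K_{I,J}\cap V(p)|$ over all twins $(I,J)$ compatible with $(X,Y)$. Because path components are vertex-disjoint and the constraints on a single path involve only the weights $x_v$ for $v\in V(p)$, the $\kappa(p)$ are independent random variables, and the event $\EE_{X,Y}$ is equivalent to $\sum_p \kappa(p) < \ep m - S - L$. The crucial arithmetic input is that $3^a + 3^c = 3^b$ has no solutions in non-negative integers, and more generally that alternating sums of powers of $3$ vanish only under highly restrictive multiset conditions on the exponents. Using this I would establish $\E[\kappa(p)] \ge c_0(q-3)$ for each $q\ge 4$ (with some absolute $c_0>0$, and the $r$-even hypothesis eliminating pathological cases). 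Summing gives $\E\bigl[\sum_p \kappa(p)\bigr] \ge 24 c_0(\eta+\ep)m$, so $\EE_{X,Y}$ requires $\sum_p \kappa(p)$ to deviate by an amount of order $\eta m$ below a mean of the same order. A multiplicative Chernoff bound on the independent bounded variables $\kappa(p)$ then yields $\P(\EE_{X,Y}) \le \exp(-c_1\eta^3 m)$, with careful bookkeeping producing $c_1 = 288$.

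The hardest part will be the lower bound $\E[\kappa(p)] \ge c_0(q-3)$. My approach is a case analysis on which subset of $V(p)$ is placed in $K$: each such choice partitions $p$ into sub-paths whose endpoint and interior ``balance'' equations must be simultaneously solvable in positive integers, and the probability (over the random weights) that such a system has a solution must be controlled uniformly. Odd-length sub-paths are always infeasible via the $3^a+3^c=3^b$ obstruction, which forces many vertices into $K$ and drives the lower bound; enumerating the surviving configurations and bounding their total probability is the crux of the argument.
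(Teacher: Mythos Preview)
Your overall architecture---reducing to independent per-component contributions $\kappa_C$ and then concentrating their sum---matches the paper. But your plan for the crucial per-path step is both more complicated than necessary and, as written, does not close.

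The paper bypasses any case analysis on which subset of $V(p)$ lands in $K$. Instead it proves a single local criterion: if $k_1<k_2<k_3$ are consecutive on a path component and $x_{k_2}>\max\{x_{k_1},x_{k_3}\}$, then $k_2\in K_{I,J}$ for \emph{every} compatible twin. This is one line of arithmetic---from $b>\max\{a,c\}$ one gets $3^b\ge 3\cdot 3^{\max\{a,c\}}>3^a+3^c$, so $|E_{k_2}|$ exceeds the total number of indices its two neighbours can contribute---not the Diophantine obstruction $3^a+3^c=3^b$ you invoke. Now partition the path into $\lfloor q/3\rfloor$ disjoint consecutive triples and let $\EE_t$ be the event that the middle letter exceeds $r/2$ while both flanking letters are at most $r/2$; these events are independent with probability exactly $1/8$ (this is precisely where ``$r$ even'' is used), and each forces its middle vertex into $K$. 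Hence $\kappa_C$ stochastically dominates $\Bin(\lfloor |V(C)|/3\rfloor,1/8)$, the sum over all components dominates $\Bin(T,1/8)$ with $T\ge m/3-\chi=8(\eta+\ep)m$, and the multiplicative Chernoff bound is then a one-liner yielding the constant $288$.

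Your route has two genuine gaps. First, an expectation bound $\E[\kappa(p)]\ge c_0(q-3)$ alone does not justify a Chernoff-type tail: the variables $\kappa(p)$ are bounded only by $q_p$, which is not uniform across components, so Hoeffding would give a denominator $\sum_p q_p^2$ that can be as large as $m\cdot\max_p q_p$; you need a stochastic-domination statement (as the paper obtains) or some sub-Gaussian control that your sketch does not supply. Second, your ``odd-length sub-paths are always infeasible via $3^a+3^c=3^b$'' heuristic is off target. The intermediate quantities in the balance recurrence $a_{t-1}+a_t=e_t$ are arbitrary non-negative integers, not powers of $3$; feasibility is governed by whether the alternating partial sums $a_t=e_t-e_{t-1}+\cdots\pm e_1$ stay non-negative and terminate at $0$, and carrying out your proposed enumeration over all choices of $K\cap V(p)$ is far messier than the one-line local inequality above.
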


We shall first show how this implies our desired result. We intend to do a union bound. Let $\UU$ be the set of pairs $(X,Y)$ of $X,Y\subset [m]$ with $|X|=|Y|$. Note that if $I,J$ is a twin, then $(\im \phi_I,\im \phi_J)\in \UU$. So, it suffices to prove
\[\sum_{(X,Y) \in \UU} \P(\EE_{X,Y}) <1.\]  

Pick $\delta>0$. We write $\UU = \UU_1 \cup \UU_2 \cup \UU_3$, where 
\[(X,Y)\in \begin{cases}\UU_1 &\textrm{if $|X\cup Y|\le (1-\ep)m$}\\
\UU_2& \textrm{if $\chi(X,Y)\ge  (\ep+\delta) m$} \\
\UU_3 &\textrm{otherwise.}\end{cases}\]

It is clear that if $\im\phi_I = X,\im\phi_J = Y$, that $[m]\setminus (X\cup Y) \subset K_{I, J}$, whence we see $\P(\EE_{X,Y})=0$ for all $(X,Y)\in \UU_1$.

Meanwhile, as $|\UU_2|\le |\UU| \le 2^{2m}$, we can apply Proposition~\ref{firstmom} to get \[\sum_{(X,Y)\in \UU_2} \P(\EE_{X,Y})\le 2^{3m} r^{- \delta m}< 1/2\] assuming $r\ge 2^{3\delta^{-1}+1}$.

Finally, for $(X,Y)\in \UU_3$,
we claim that $|X\cup Y|$ is large. Indeed, for any $(X,Y)\in \UU$ we have that \[|X|+|Y| = 2e(G_{(X,Y)}) \ge 2(|X\cup Y|-\chi(X,Y)) \](and equality holds on the RHS when $G_{(X,Y)}$ lacks loops). Thus as $(X,Y)\not \in \UU_1\cup \UU_2$, it follows that $|X|+|Y|\ge (2-4\ep-2\delta)m$. Take $\alpha := 4\ep+2\delta$.

Whence, we have that
\[|\UU_3| \le m^2\binom{m}{\alpha m}^2 \](since there are at most $(t+1)\binom{m}{t}$ sets $Z\subset [m]$ of size at most $t$ which can be the complement of a set having cardinality at least $n-t\ge n/2$). By Stirling's approximation, we get that 
\[|\UU_3| \le O(m^3 (\alpha^{-\alpha }(1-\alpha)^{\alpha-1})^{2m})<(\alpha^{-\alpha}(1-\alpha)^{-1})^{2m} \]for large $m$. Meanwhile, we have that $(X,Y)\in \UU_3$ implies that 
\[m/3-\chi(X,Y)-8\ep m  \ge (1-27\ep -3\delta )m/3 .\] Thus, taking $\eta= (1-27\ep-3\delta)/24$, we have that
\[\P(\EE_{X,Y}) <\exp(-\eta^3 m).\]

Thus
\[\sum_{(X,Y)\in \UU_3} \P(\EE_{X,Y}) \le (\alpha^{-\alpha}(1-\alpha)^{-1})^{2m}\exp(-\eta^3m) = \exp\left(m(2\alpha\ln\frac{1}{\alpha}+2\ln\frac{1}{1-\alpha} - \eta^3)\right).\]When $\ep,\delta \downarrow 0$, we have that $\alpha \downarrow 0$ and $\eta\uparrow 1/24$. As this happens, we have $2\alpha\ln \frac{1}{\alpha}+2\ln \frac{1}{1-\alpha}\downarrow 0$ and $\eta^3 \uparrow 1/13824>0$. Thus, for small $\ep,\delta>0$ we have that $\sum_{(X,Y)\in \UU_3}\P(\EE_{X,Y}) <1/2$ as desired.

It remains to establish our claims. Before doing so, we establish some notation.

Given $(X,Y)\in \UU$, we write $\kappa_{(X,Y)}$ to be the minimum of $|K_{I,J}|$ over all twins $I,J$ with $G_{I,J} = G_{(X,Y)}$. Clearly, $\EE_{X,Y}$ is the indicator function of the event that $\kappa_{(X,Y)}<\ep m$.

Next, given a graph $G$, we write $\CC(G)$ to denote the set of connected components of $G$. For any component $C\in \CC(G_{(X,Y)})$ (which is either a path with $e(C)\ge 0$ edges, or a single looped vertex), we write $\kappa_C$ for the minimum of $|K_{I,J} \cap V(C)|$ over all $I,J$ with $C\in \CC(G_{I,J})$. 

Inspecting our definitions, we get the following useful facts.
\begin{prp}\label{bycomponent}Consider $(X,Y)\in \UU$.

We have that
\[\kappa_{(X,Y)} \ge \sum_{C\in \CC(G_{(X,Y)})} \kappa_C.\]Furthermore, the set of random variables $\{\kappa_C:C\in \CC(G_{(X,Y)})\}$ are independent over the randomness of $c=c_x$.
\begin{proof}To see the first part, we note
\[\kappa_C = \min_{I,J: G_{I,J} = G_{(X,Y)}}\{|K_{I,J}|\}\]and for each such $I,J$ we have that
\[|K_{I,J}| = \sum_{C\in \CC(G_{I,J}) = \CC(G_{(X,Y)})}|K_{I,J} \cap V(C)| \ge \sum_{C\in \CC(G_{(X,Y)})} \kappa_C.\]

We now establish the second part. For each $C$, we observe that $\kappa_C$ depends only on $x|_{V(C)}$. Whence, independence immediately follows, as the components $\CC(G_{(X,Y)})$ are all vertex-disjoint.
\end{proof}
\end{prp}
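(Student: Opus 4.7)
The plan is to handle the two claims in Proposition~\ref{bycomponent} separately. For the inequality, I would first observe that the vertex set $[m]$ of $G_{(X,Y)}$ partitions as $\bigsqcup_{C \in \CC(G_{(X,Y)})} V(C)$, since every vertex belongs to a unique connected component (isolated vertices, including those outside $X \cup Y$, count as singleton paths with zero edges). Hence, for any twin $(I, J)$ with $G_{I,J} = G_{(X,Y)}$, the set $K_{I,J} \subseteq [m]$ decomposes as
\[|K_{I,J}| = \sum_{C \in \CC(G_{(X,Y)})} |K_{I,J} \cap V(C)|.\]
Each summand is at least $\kappa_C$ by definition, so $|K_{I,J}| \ge \sum_C \kappa_C$. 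Taking the minimum over admissible twins $(I, J)$ on the left yields $\kappa_{(X,Y)} \ge \sum_C \kappa_C$.

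For the independence claim, my strategy is to argue that each $\kappa_C$ is a measurable function of $x|_{V(C)}$ alone. Recall that $c = c_x$ is determined entirely by the block function $k_\cdot:[n] \to [m]$, and $x$ only enters through the block lengths $|E_k| = 3^{x_k}$. I would like to show that the optimisation defining $\kappa_C$ — the minimum of $|K_{I,J} \cap V(C)|$ over twins $(I, J)$ such that $C$ is a component of $G_{I,J}$ — depends only on the block lengths indexed by $V(C)$, and therefore only on $x|_{V(C)}$. Granting this, the fact that the $V(C)$'s are pairwise disjoint, combined with the i.i.d.\ structure of $x$, immediately gives mutual independence of the family $\{\kappa_C\}_{C \in \CC(G_{(X,Y)})}$.

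The main obstacle will be justifying this localisation cleanly. One needs to verify that building an optimal ``local piece'' of a twin on $\bigcup_{k \in V(C)} E_k$ can indeed be done independently of the other components: this rests on the observation that the coloring $c$ across different blocks depends only on whether two indices lie in the same block, so locally optimal pieces chosen separately for different components can be concatenated into a single valid twin whose graph is exactly $G_{(X,Y)}$. Once this factorisation is established, the independence is formal, and combined with the first part it produces the conclusion needed to apply Chernoff-style concentration in Proposition~\ref{cherncor}.
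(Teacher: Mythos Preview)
Your proposal is correct and follows essentially the same approach as the paper: decompose $|K_{I,J}|$ over components to get the inequality, then observe that each $\kappa_C$ is determined by $x|_{V(C)}$ to get independence from the i.i.d.\ structure of $x$. Your final paragraph about concatenating locally optimal pieces into a single twin with graph exactly $G_{(X,Y)}$ actually proves more than is needed here (it yields the equality $\kappa_{(X,Y)} = \sum_C \kappa_C$, which the paper mentions only in a remark); for the localisation it suffices to note that a local twin piece living in $\bigcup_{k\in V(C)} E_k$ is already, by itself, a valid twin $I,J$ with $C\in\CC(G_{I,J})$, so no gluing across components is required.
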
\begin{rmk}It is not too hard to see that in fact $\kappa_{(X,Y)} = \sum_{C\in \CC(G_{(X,Y)})} \kappa_C$, since we can optimize the intersection of $I,J$ with each component without any issues. We omit the details since we do not need an upper bound for $\kappa_{(X,Y)}$.\end{rmk}

Without further ado, we prove our claims.
\begin{proof}[Proof of Proposition~\ref{firstmom}]Let $\CC := \CC(G_{(X,Y)}),\chi:= \chi(X,Y) = |\CC|$. We want to upper bound $\P(\EE_{X,Y}) = \P(\kappa_{(X,Y)}<\ep m)$.

By Proposition~\ref{bycomponent}, we have that \[\kappa_{(X,Y)} \ge \sum_{C\in \CC} \kappa_C\]where the LHS is the sum of $\chi$ independent random variables.

We will show that for any component $C$, that \begin{equation}
    \P(\kappa_C=0)\le 1/r \label{probperfect}.
\end{equation} Assuming this, a quick union bound gives our desired result. Indeed,
\begin{align*}
    \P(\kappa_{(X,Y)}<\ep m) &\le \P(|\{C\in \CC:\kappa_C = 0\}| \ge \chi-\ep m)\\
    &\le \sum_{Z\subset \CC:|Z|\ge \chi-\ep m} \P(\kappa_C = 0\textrm{ for all } C\in Z)\\
    &= \sum_{Z\subset \CC:|Z|\ge \chi-\ep m} r^{-|Z|}\\
    &\le \#(Z\subset \CC:|Z|\ge \chi-\ep m) r^{-(\chi-\ep m)}\le 2^\chi r^{-(\chi-\ep m)}\\
\end{align*}(here the first inequality is because each $\kappa_C$ takes non-negative integer values).

It remains to prove that Inequality~\ref{probperfect} holds.

Fix any component $C\in \CC$. We consider two cases.

\underline{Case 1 ($|V(C)| = 1$):} Here, we show $\kappa_C = 1$ always holds, which is clearly more than sufficient. 
Let $V(C) = \{k\}$. Consider any twin $I,J$ with $C\in \CC(G_{I,J})$. We will argue that $k\in K_{I,J}$, which implies $|K_{I,J} \cap V(C)| = |V(C)| = 1$ as claimed.

Let $A = \phi_I^{-1}(k), B = \phi_J^{-1}(k)$. Observe $A=B$ must hold (otherwise, $G_{I,J}$ would have an edge from $k$ to some $k'\neq k$, contradicting that $C$ is a component of $G_{I,J}$). So, it follows that $|I\cap E_k| = |A| = |B| = |J\cap E_k|$. 

Meanwhile, as $I,J$ are twins, they are disjoint, and so $|(I\cup J) \cap E_k| = |I\cap E_k| +|J\cap E_k| = 2|A|$ is even. Meanwhile, $|E_k|$ is odd, thus $E_k\subset (I \cup J)$ cannot hold, implying $k\in K_{I,J}$ as desired. 

\underline{Case 2 ($|V(C)|>1$):} Let $V = \{v_1< \dots <v_q\}$. As noted before, we have that $E(C) = \{v_tv_{t+1}:t\in [q-1]\}$. We will prove that if $x(v_1)\neq x(v_q)$, then $k_C>0$. Consequently, this means $\P(k_C =0)\le 1/r$, as desired.

Consider any twin $I,J$ with $C\in \CC(G_{I,J})$. Also, suppose $x(v_1)\neq x(v_q)$.

For $t\in [q]$, let $a_t = |\phi_I^{-1}(v_t)|, b_t = |\phi_J^{-1}(v_t)|, e_t = |E_{v_t}| = 3^{x(v_t)}$. Also, WLOG assume $a_1\neq 0$. Since $E(C)\subset E(G_{I,J})$, we have that $a_t = b_{t+1}>0$ for $t\in [q-1]$. Furthermore, we must have that $b_1 = a_q = 0$, so that $C\in \CC(G_{I,J})$ (otherwise, $v_1$ or $v_q$ would have additional edges).

Now suppose for sake of contradiction that $\kappa_C=0$. Then, we must have that $a_t+b_t = e_t$ for all $t\in [q]$. 

We shall see inductively that $e_1\mid a_t$ yet $3e_1\nmid  a_t$ for $t\in [q-1]$. Consequently, since $a_q =0, b_q = a_{q-1}$, the condition $e_q = a_q+b_q$ will imply $e_1\mid e_q,3e_q\nmid e_q$, which can only happen if $e_1 = e_q$ (as they are both powers of 3), contradiction.

It remains to establish the desired divisibility conditions, namely that $e_1\mid a_t$ and $3e_1\nmid a_t$ for $t\in [q-1]$.

Recalling that we're assuming $e_t=a_t+b_t$ for all $t$, and that $b_1 = 0$, we must have that $a_1 = e_1$. Thus the divisibility condition is satisfied.

Now, suppose we have the divisibility condition for some $t\in [q-2]$. We shall deduce it also holds for $t+1$. 

By assumption, we have that $a_{t+1} = e_{t+1}-b_{t+1} = e_{t+1} -a_t$. Also, since $C\in G_{I,J}$ we have that $a_{t+1}>0$. In particular, we have that $e_{t+1}>a_t \ge e_1$ (here the last inequality is a consequence of our divisibility conditions and the fact that $a_t\ge 0$). But then, we have that $3e_1 \mid e_{t+1}$ (as $e_{t+1}$ is a power of 3). Thus, recalling $a_{t+1} = e_{t+1}-a_t$, we see that $a_{t+1} \equiv -a_t \pmod{3e_1}$, which tells us that $e_1\mid a_{t+1}$ and $3e_1\nmid a_{t+1}$ as desired.\end{proof}
\begin{proof}[Proof of Proposition~\ref{cherncor}]Let $\CC = \CC(G_{(X,Y)}),\chi = |\CC|$. Suppose $m/3-\chi= 8(\eta+\ep)m$ for some $\eta>0$.

We will show that for any component $C$, that $\kappa_C$ stochastically dominates\footnote{We write $\Bin(t,p)$ to denote the binomial distribution with $t$ trials each with parameter $p$.} $\Bin(\lfloor |V(C)|/3\rfloor,1/8)$. Whence, it will follow by Proposition~\ref{bycomponent} that
\[\kappa_{(X,Y)} \ge \sum_{C\in \CC}\kappa_C \]which stochastically dominates
\[\sum_{C\in \CC} \Bin(\lfloor |V(C)|/3\rfloor,1/8) = \Bin\left(\sum_{C\in \CC} \lfloor |V(C)|/3 \rfloor,1/8\right).\]Noting \[T:=\sum_{C\in \CC}\lfloor |V(C)|/3\rfloor \ge  m/3-|\CC| =  8(\eta+\ep)m ,\] we have that $T\in [8(\eta+\ep)m,m]$ and thus $\mu:= T/8 \ge (\eta+\ep)m$. Thus by the stochastic domination above and a Chernoff bound, we see that \[\P(\kappa_C<\ep m) \le \P\left(\Bin(T,1/8)<\left( 1-\frac{\eta}{\eta+\ep}\right) \mu\right) \le \exp(-(\eta/(\eta+\ep))^2\mu/2) \le \exp(-288\eta^3 m)\](the last inequality follows from the fact that $m/3-\chi =8(\eta+\ep)m$ for $\eta>0$ implies that $\eta+\ep\le 1/24$).

It remains to prove that $\kappa_C$ stochastically dominates $\Bin(\lfloor |V(C)|/3\rfloor,1/8)$.

If $|V(C)| = 1$ then there is nothing to prove, so we may suppose that $C$ is a path. Let $V(C) = \{v_1<\dots<v_q\}$.

For $t\in [q]$, let $X_t = x_{v_t}$. Let $\ell = \lfloor q/3\rfloor $. For $t\in [\ell]$, we define the interval $U_t = \{3t-2,3t-1,3t\}\subset [q]$. It is clear that for distinct $t,t'\in [\ell]$ that $U_t,U_{t'}$ are disjoint.

For $t\in [\ell]$, let $\EE_{t} = \EE_{C,t}$ be the event that $X_{3t-1}> r/2\ge \max\{X_{3t-2},X_{3t}\}$. It is not hard to see that the events $\{\EE_t:t\in [\ell]\}$ are independent, as they are determined by disjoint sets of random variables that are all independent of one another. Furthermore, as we have assumed $r$ is even, it is clear that $\P(\EE_t) = 1/8$ for all $t$.

Consequently, is $|\{t\in [\ell]: \EE_t\textrm{ holds}\}|$ is distributed identically to $\Bin(\ell,1/8) = \Bin(\lfloor |V(C)|/3\rfloor,1/8)$. So now it suffices to show \[\kappa_C\ge |\{t\in [\ell]:\EE_t\textrm{ holds}\}|.\]The above is immediate corollary of the following fact, completeing our proof.
\begin{clm}Consider $k_1<k_2<k_3\in [m]$. Suppose that $x_{k_2}>\max\{x_{k_1},x_{k_3}\}$.

Then for any twin $I,J$ with $\{k_1k_2,k_2k_3\}\subset E(G_{I,J})$, we have $k_2\in K_{I,J}$.
\begin{proof}Consider any such $I,J$. For $t=1,2,3$, set $a_t = |E_{k_t}\cap I|,b_t = |E_{k_t}\cap J|,e_t = |E_{k_t}|$. WLOG, we may assume that $a_1 = b_2, a_2 = b_3$. 

Now assuming $x_{k_2}>\max\{x_{k_1},x_{k_3}\}$, implying \begin{align*}
    e_2 &\ge \max\{3e_1,3e_3\}\\
    &> e_1+e_3 \\
    &\ge a_1+b_3 \\
    &= b_2+a_2.\\
\end{align*}Consequently, $E_{k_2}\setminus(I\cup J)$ must be non-empty, implying $k_2\in K_{I,J}$ as desired.
\end{proof}
\end{clm}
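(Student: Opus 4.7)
The plan is to show directly that the middle block $E_{k_2}$ cannot be fully covered by $I\cup J$. Write $a_t := |I\cap E_{k_t}|$, $b_t := |J\cap E_{k_t}|$, and $e_t := |E_{k_t}| = 3^{x_{k_t}}$ for $t\in\{1,2,3\}$. Since $I,J$ are disjoint, $|(I\cup J)\cap E_{k_t}| = a_t+b_t \le e_t$, so the desired conclusion $k_2\in K_{I,J}$ is equivalent to the strict inequality $a_2+b_2 < e_2$. The argument splits into a graph-theoretic upper bound on $a_2+b_2$ and an arithmetic lower bound on $e_2$ coming from the weight gap.

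For the upper bound, I would first pin down $k_2$'s neighborhood in $G_{I,J}$. Because $k_1k_2$ and $k_2k_3$ are both non-loop edges, the component of $k_2$ in $G_{I,J}$ cannot be a singleton or a single looped vertex; so by the structure theorem recalled in the proof of Proposition~\ref{cherncor}, it is a path. The monotone ordering on the path's vertex set together with $k_1<k_2<k_3$ being edges forces $k_1,k_2,k_3$ to be consecutive along the path, and precludes a loop at $k_2$ (which would push its degree above $2$). Hence the only graph-neighbors of $k_2$ are $k_1$ and $k_3$. Now any step $t\in[\ell]$ with $\phi_I(t)=k_2$ must satisfy $\phi_J(t)\in\{k_1,k_3\}$: it cannot equal $k_2$ (no loop) and cannot take any other value (that would manufacture an extra edge at $k_2$). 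Therefore $a_2 = |\phi_I^{-1}(k_2)| \le |\phi_J^{-1}(k_1)|+|\phi_J^{-1}(k_3)| = b_1+b_3$, and symmetrically $b_2 \le a_1+a_3$. Summing and regrouping with disjointness gives $a_2+b_2 \le (a_1+b_1)+(a_3+b_3) \le e_1+e_3$.

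For the lower bound on $e_2$, the hypothesis $x_{k_2}>\max\{x_{k_1},x_{k_3}\}$ together with $x$ being integer-valued gives $x_{k_2}\ge \max\{x_{k_1},x_{k_3}\}+1$, whence $e_2 \ge 3\max\{e_1,e_3\}$. Since $\max\{e_1,e_3\}\ge (e_1+e_3)/2$ and $\max\{e_1,e_3\}>0$, we get $3\max\{e_1,e_3\} > e_1+e_3$. Combining this with the previous paragraph yields $a_2+b_2 \le e_1+e_3 < e_2$, so $E_{k_2}\setminus(I\cup J)$ is non-empty and $k_2\in K_{I,J}$, as required.

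The only nontrivial step is identifying $k_2$'s neighborhood, and this is essentially bookkeeping on the already-established path description of the components of $G_{I,J}$; everything else is trivial counting. The real content that makes the claim go through is the multiplicative gap $e_2 \ge 3\max\{e_1,e_3\}$, which is precisely why the weights were chosen as $w(l)=3^l$ in the construction of $\pi_x$: a smaller base would fail to beat $e_1+e_3$, while geometric growth with ratio $3$ comfortably does.
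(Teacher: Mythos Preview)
Your proof is correct and follows essentially the same route as the paper: bound $a_2+b_2$ by $e_1+e_3$ via the graph structure at $k_2$, then invoke $e_2\ge 3\max\{e_1,e_3\}>e_1+e_3$. The only difference is bookkeeping: the paper uses the sharper identities $a_1=b_2$ and $a_2=b_3$ (coming from the fact that along a path component all edges carry the same $I$/$J$ orientation), whereas you use the weaker but sufficient inequalities $a_2\le b_1+b_3$ and $b_2\le a_1+a_3$ derived directly from the neighborhood of $k_2$; your version is slightly more self-contained since it does not appeal to that orientation fact.
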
\end{proof}\end{proof}
\end{repthm}

\section{Conclusion}\label{conclusion}

There are a number of questions one can ask about this regime. Here are some problems we believe to be interesting.

Let
\[\rho_r := \liminf_{n\to\infty}F_r(n)/n \](presumably, the limit is still well-defined if we replace `$\liminf$' with `$\lim$', but we don't think this matter is especially important). Theorems~\ref{main} and \ref{general upper} proved that
\[\Omega(1/r^2) \le \rho_r \le O(1/r).\] This naturally begs the question: 
\begin{que}How does $\rho_r$ grow (as $r\to \infty$)?

\end{que}\noindent We are inclined to believe that our upper bound is closer to the truth. Indeed, it already seems difficult to create a coloring $c\in \CC_{n;r}$ without a twin $(I,J)$ where:
\begin{itemize}
    \item $|i_t-j_t|\le r^2$ for all $t$;
    \item for every interval $E\subset [n]$ of length $10r^2$, we have $|I\cap E|\ge r/100$.
\end{itemize}(Here the second condition implies that $f(c)\ge n/1000r-O_r(1)$, while the first condition is just to restrict our attention to ``local strategies'' for finding twins.)

We remark that it is still open to determine the analogous limit for $r$-ary strings (the best lower bound is that there exist string-twins of length $\Omega(n/r^{2/3})$).

Here are some generalized problems we didn't know how to answer.

We say that a twin $I,J$ is \textit{non-crossing} if $\max I < \min J$. Define $f_{cross}(c)$ to be the longest non-crossing twin $I,J$ in $c$, and $F_{cross,r}(n):= \max_{c\in \CC_{r,n}}\{f_{cross}(c)\}$.
\begin{que}
Is $F_{cross,r}(n) =o(n)$ for some $r$?
\end{que}\noindent We believe the answer is yes even for $r=2$. Such a result is equivalent to proving that for every $k\ge 1,\ep >0$, that when $n$ is sufficiently large, there exists $c_1,\dots,c_k\in \CC_{n;2}$ such that for $i\neq j\in [k]$, we have that for any $I = \{i_1<\dots<i_\ell\},J = \{j_1<\dots<j_\ell\}\subset [n]$ with $|I| = |J| \ge \ep n$, that there is some $t$ where $c_i(\{i_t,i_{t+1}\})\neq c_j(\{j_t,j_{t+1}\})$.

\hide{Given $d\ge 1$, we say $I = \{i_1<\dots<i_\ell\},J= \{j_1<\dots<j_\ell\}$ is a $d$-twin if $c(\{i_t,i_{t+a}\}) = c(\{j_t,j_{t+a}\})$ for all $a\in [d],t\in [\ell-a]$. One can define $f_d(c)$ to be the length of the longest $d$-twin in $c$, and let $F_{r;d}(n) := \min_{c\in \CC_{n;r}}\{f_d(c)\}$.

Define $c_{r;d} = \liminf_{n\to\infty} \frac{F_{r;d}(n)}{n}$. It is not too hard to extend our methods to prove that $c_{r;d}>0$ always holds (the idea is to color hyperedges of a complete $(d+1)$-partite $(d+1)$-uniform hypergraph with $r^d$ colors, and }

Finally, rather than edge-coloring $K_n$, we could consider edge-colorings of $K_n^{(s)}$ (the complete $n$-vertex hypergraph of uniformity $s$). Here, one can say a twin is a pair of disjoint subsets $I = \{i_1<\dots<i_\ell\},J = \{j_1<\dots<j_\ell\}$ such that $c(\{i_t,i_{t+1},\dots,i_{t+s-1}\}) = c(\{j_t,j_{t+1},\dots,j_{t+s-1}\})$ for all $t\in [\ell-s+1]$. Define $F_{r}^{(s)}(n)$ in the natural way, and let $\rho_r^{(s)}:= \liminf_{n\to\infty} \frac{F_r^{(s)}(n)}{n}$. 

Our methods can be extended to prove that $\rho_r^{(s)} > 0$ for all $r,s\ge 1$. We only provide a sketch, as our bounds seem far from tight (roughly inverse tower-type). The idea is to generalize Lemma~\ref{matchable}, by considering $r$-edge-colorings of a complete $s$-partite hypergraph with parts $A,B_1,\dots,B_{s-1}$ with $|A| = r+1$ and taking $|A|\lll |B_1|\lll \dots \lll |B_{s-1}|$ (here $x\lll y$ means $y$ is sufficiently large with respect to $x$). We'd say $\vec{b} \in B_1\times \dots \times B_{s-1}$ is $i$-popular if there exists distinct $a,a'\in A$ such that $c(\{a\}\cup \{b_t:t=1,\dots,s-1\}) = c(\{a'\}\cup \{b_t:t=1,\dots,s-1\}) = i$. By pigeonhole, there exists some color $i\in [r]$ such that at least $1/r$ of the $(d-1)$-tuples $\vec{b}\in B_1\times\dots \times B_{s-1}$ are $i$-popular. Then, using something like dependent random choice we can ensure that there exists $B_1'\subset B_1,\dots, B_{s-1}'\subset B_{s-1}$ so that $|B_1'| = |A|,|B_2'|= |B_1|,\dots, |B_{s-1}'| = |B_{s-2}|$ with each $\vec{b} \in B_1'\times \dots \times B_{s-1}'$ being $i$-popular. One should then be able imitate the proof of Theorem~\ref{main} and show that $\rho_r^{(s)}$ is bounded away from zero. 

Already for $s=3$, it would be nice to understand how things work.
\begin{que}Do we have $\rho_r^{(3)} \le \exp(-\Omega(r))$?
\end{que}

Lastly, it could be nice to consider an intermediate problem. Given positive integers $r,s$, let $F_{r;s}(n)$ be the largest $\ell$ such that for every $c\in \CC_{n;r}$, there exists $I = \{i_1<\dots<i_\ell\},J=\{j_1<\dots<j_\ell\}$ where $c(\{i_t,i_{t+k}\}) = c(\{j_t,j_{t+k}\})$ for all $k\in [s-1],t\in [\ell-k]$. It is clear that $F_{r^{\binom{s}{2}}}^{(s)}(n)\le F_{r;s}(n)\le F_r(n)$, and it would be nice to explore whether we can obtain better bounds in this specialized setting. 

\begin{que}Do we have $\liminf_{n\to\infty}\frac{F_{r,3}(n)}{n} \ge r^{-\Omega(1)}$?
\end{que}

\end{document}